\theoremstyle{definition}
\newtheorem{propos}{Proposition}
\numberwithin{propos}{section}
\newtheorem{lemma}{Lemma}
\numberwithin{lemma}{section}
\newtheorem{theorem}{Theorem}
\numberwithin{theorem}{section}
\renewcommand\@seccntformat[1]{\csname the#1\endcsname.\quad}
\newcommand{\vast}{\bBigg@{3}}
\newcommand{\Vast}{\bBigg@{4}}
\DeclareMathOperator{\charac}{char}
\newcommand{\const}{\mathrm{const}}
\title{Characters of Feigin-Stoyanovsky subspaces and Brion's theorem}
\author{I. Makhlin\footnote{National research university "Higher school of economics", email:imakhlin@mail.ru.}} \date{}
\begin{document}

\maketitle

\begin{abstract}
We give an alternative proof of the main result of paper~\cite{fjlmm}, the proof relies on Brion's theorem about convex polyhedra. The result itself can be viewed as a formula for the character of the Feigin-Stoyanovsky subspace of an integrable irreducible representation of the affine Lie algebra $\widehat{\mathfrak{sl}_n}(\mathbb{C})$. Our approach is to assign integer points of a certain polytope to the vectors comprising a monomial basis of the subspace and then compute the character via (a variation of) Brion's theorem.
\end{abstract}

\section{Introduction}

Within this section it will be convenient to set $a_n=a_0$.

We begin by introducing the necessary notation and then giving the statement of the result in~\cite{fjlmm}.

First consider the simple Lie algebra $L=\mathfrak{sl}_n(\mathbb C)$ with a fixed Cartan decomposition $L=N^-\oplus H\oplus N^+$, as well as a set of simple roots $\alpha_1,\ldots,\alpha_{n-1}\in H^*$ with the standard numbering. For a positive root alpha consider the generators $e_\alpha$, $f_\alpha$  in the root spaces of $\alpha$ and $-\alpha$.

We will be interested in the representations of the corresponding affine Lie algebra $$\hat{L}=\widehat{\mathfrak{sl}_n}(\mathbb{C})=L\otimes \mathbb{C}[t,t^{-1}]\oplus \mathbb{C}c\oplus \mathbb{C}d,$$ where $c$ is the central element and $d$ is the degree operator. The theory of such algebras and their representations is presented, for example, in~\cite{carter}.

The following notation will be useful to us: let $x(m)$ denote $x\otimes t^m\in \hat{L}$ for $x\in L$.

We will be working with the integrable irreducible representation $L(\lambda)$ of $\hat L$ with highest weight $\lambda$ and highest weight vector $v_0$. The weight $\lambda$ can be written as $$(a_0,\ldots,a_{n-1})$$ with respect to the basis consisting of the fundamental weights. In other words, $\lambda(h_{\alpha_i}(0))=a_i$ for each $1\le i\le n-1$ and $$\lambda(c)=k=\sum\limits_{i=0}^{n-1}a_i$$ ($k$ is the level of $L(\lambda)$). Weight $\lambda$ is integral and dominant, thus all the $a_i$ are nonnegative integers. 

Now we define the the subspace $V\subset L(\lambda)$, sometimes referred to as the Feigin-Stoyanovsky subspace. Let $$\{\gamma_i=\alpha_{1}+\ldots+\alpha_{i}\}$$ be a basis of the root space of $L$ and denote $f_i=f_{\gamma_i}$. Let $P$ be the abelian subalgebra of $\hat L$, generated by all the $f_i(m)$ with $m\le 0$. The subspace of interest is $V=\mathcal{U}(P)(v_0)$.

A monomial basis in $V$ is constructed in~\cite{fjlmm}, which can be defined as follows. The monomials in $\mathcal{U}(P)$ correspond to sequences of nonnegative integers with finite support. Namely, for a monomial $p$ and any integers $q\ge 0$ and $1\le r\le n-1$ the term $x(p)_{q(n-1)+r}$ of the corresponding sequence $(x(p)_i)_{i\ge 1}$ is equal to the power of  $f_r(-q)$ in $p$. Simply put, the terms of the sequence are just the powers of the monomial with the $f_i(-m)$ ordered lexicographically by $m$ and $i$.

Now we introduce a sequence $\psi_i$ of functionals on the space of sequences with finite support. For $i<n$ let $\psi_i(x)=x_1+\ldots+x_i$ while $\psi_i(x)=x_{i-n+1}+\ldots+x_i$ for $i\ge n$. The mentioned basis is given by the set of monomials $\Pi\subset\mathcal{U}(P)$, for which the sequence $x(p)$ satisfies the following set of inequalities.
\begin{enumerate}[label=\Alph*.]
\item $\psi_i(x(p))\le a_1+\ldots+a_i=:b_i$ for each $1\le i\le n-1$.
\item $\psi_i(x(p))\le k=:b_i$ for each $i\ge n$.
\end{enumerate}

Thus we can state
\begin{theorem}\label{main}
The set $\{pv_0,p\in\Pi\}$ comprises a basis of the space $V$.
\end{theorem}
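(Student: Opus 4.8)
The plan is to prove the two halves of the statement separately: that $\{pv_0:p\in\Pi\}$ spans $V$, and that these vectors are linearly independent.

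\emph{Spanning.} Since $P$ is abelian, $\mathcal U(P)$ is a polynomial ring in the variables $f_r(-q)$ (with $1\le r\le n-1$, $q\ge 0$) and $V=\mathcal U(P)v_0$ is a cyclic module, a quotient of $\mathcal U(P)$ by the annihilator of $v_0$. I would collect enough relations: from the finite-dimensional $\mathfrak{sl}_n$-submodule $\mathcal U(\mathfrak{sl}_n)v_0$ --- irreducible with highest weight $(a_1,\dots,a_{n-1})$ --- and the $\mathfrak{sl}_2$-triples attached to the roots $\gamma_i$ (so that $\langle\lambda,\gamma_i^\vee\rangle=b_i$), the degree-zero relations $f_1(0)^{m_1}\cdots f_i(0)^{m_i}v_0=0$ whenever $m_1+\dots+m_i>b_i$, which encode the inequalities of type~A; and from integrability of $L(\lambda)$ the ``current'' relations --- the vanishing on $v_0$ of the coefficients of suitable powers of $f_{\gamma_r}(z)=\sum_{q\ge 0}f_r(-q)z^q$ --- which encode the inequalities of type~B. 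Then, fixing a monomial order on $\mathcal U(P)$ whose leading terms are precisely the subwords forced to occur when $\psi_i(x(p))>b_i$, and multiplying a relation by the complementary monomial (legitimate since $\mathcal U(P)$ acts on $v_0$ through its commutative quotient), one rewrites any monomial violating A or B as a combination of strictly smaller monomials applied to $v_0$; induction on the order within each fixed weight space gives that $\Pi$ spans. This half is in essence the argument of~\cite{fjlmm}, and I expect it to be routine.

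\emph{Linear independence.} Once spanning is known, $\{pv_0:p\in\Pi\}$ is a basis if and only if $\operatorname{ch}V$ agrees, weight space by weight space, with the formal count $\sum_{p\in\Pi}e^{\operatorname{wt}(pv_0)}$; and spanning already yields $\operatorname{ch}V\le\sum_{p\in\Pi}e^{\operatorname{wt}(pv_0)}$ coefficientwise, so only the reverse inequality is at stake. Brion's theorem enters because the exponent sequences occurring in $\Pi$ are precisely the integer points of the rational polyhedron $\mathcal Q$ cut out of the space of finitely supported sequences by the inequalities A and B; grading by $t$-degree makes each graded component a genuine bounded polytope, so $\sum_{p\in\Pi}e^{\operatorname{wt}(pv_0)}$ is a lattice-point generating function, and the variant of Brion's theorem used in the paper rewrites it as a finite alternating sum over the vertex cones of $\mathcal Q$, hence in closed product form, say $F(\lambda)$. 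It then remains to produce the matching lower bound $\operatorname{ch}V\ge F(\lambda)$. I would obtain it either from the standard embedding of $L(\lambda)$ into a tensor product of level-one integrable modules --- which reduces the estimate to the level-one Feigin--Stoyanovsky subspaces, whose characters are elementary products --- or from a free (complete-intersection) resolution of $V$ over $\mathcal U(P)$ whose Euler characteristic is $F(\lambda)$. Combined with the upper bound this forces $\operatorname{ch}V=F(\lambda)=\sum_{p\in\Pi}e^{\operatorname{wt}(pv_0)}$, and the spanning set is then a basis.

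The main obstacle is precisely this last estimate, i.e.\ the genuine linear independence: spanning and the polyhedral bookkeeping are mechanical, but the lower bound for $\operatorname{ch}V$ needs real input (an intertwiner/tensor-product argument, or a resolution). A secondary technical point worth flagging is that Brion's theorem is classically stated for bounded polytopes, whereas $\mathcal Q$ is unbounded; one must either argue graded component by graded component or invoke the extension to pointed rational polyhedra, which requires checking that $\mathcal Q$ is pointed and that the relevant series converges wherever the character series does --- this is the ``variation'' of Brion's theorem on which the argument rests.
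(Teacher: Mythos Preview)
Your two-step plan (spanning, then a character comparison) is exactly how the paper summarizes the argument. But note that the paper does not itself prove Theorem~\ref{main}: it attributes both the surjectivity of $\varphi:S\to V$ and the equality $\charac V=\exp\lambda\,\charac S$ to~\cite{fjlmm}, and its own contribution is solely an alternative derivation of the formula for $\charac S$ (Theorem~\ref{char}) via Brion's theorem. So the ``lower bound for $\charac V$'' that you correctly flag as the real obstacle is not supplied in this paper either --- it is imported from~\cite{fjlmm}, where (as the paper indicates) $\charac V$ is known independently and the work goes into showing that $\charac S$ matches, by checking that both sides of Theorem~\ref{char} are the unique solution of a common recurrence. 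Your suggested substitutes (embedding into a tensor product of level-one modules, or a free resolution over $\mathcal U(P)$) are plausible strategies but are not what is done, and would themselves need to be carried out.

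Two corrections on the Brion side. First, the output is neither a ``finite alternating sum'' nor a ``closed product form'': the paper's infinite-dimensional Brion identity (Theorem~\ref{infbrion}) is an infinite sum $\sum_u\tau_u$ over all vertices of $\Pi$, and the substantive step is Theorem~\ref{nonsimp}, showing that every non-simple vertex contributes zero; what survives is still an infinite sum, indexed by the good binary sequences $\Theta$, each summand a single product as in~(\ref{simp}). Second, your proposed handling of unboundedness (work one graded piece at a time, or cite the pointed-polyhedron extension) is not the paper's route: it proves a genuine limit version of Brion in the countable-dimensional setting by applying the finite-dimensional theorem to the sections $\Pi_m$ and establishing coefficientwise stabilization (Theorem~\ref{infbrion} together with Proposition~\ref{pospow}).
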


If $S\subset \mathcal{U}(P)$ is the span of the monomials $p\in\Pi$, then $p\rightarrow pv_0$ provides a linear map $\varphi:S\rightarrow V$. Theorem~\ref{main} is proved in~\cite{fjlmm} by showing that $\varphi$ is surjective and $\charac V=\exp\lambda \charac S$. In order to prove the last equality, an explicit formula for the character $\charac S$ is given. That computation is the focal point of ~\cite{fjlmm}. In this paper we essentially present a different method of obtaining the formula. We now proceed to give the necessary definitions and formulate the result.

Denote by $\Theta$ the set of {\it good} binary sequences $y=(y_i)$ (of 0's and 1's) with finite support. A sequence is considered good if for every $i$ one has $y_{i+n-1}=0$ whenever $y_i=0$. Let us define a character $F_y$ for each good $y$, the result will be given in terms of these $F_y$.

Each good sequence is associated with an element of $W$, the Weyl group of $\hat L$. Let $s_\alpha\in W$ be the reflection corresponding to root $\alpha$. Furthermore, for integers $1\le r\le n-1$ и $q\ge 0$ let $\gamma_{q(n-1)+r}=\gamma_{r}-q\delta$. Then we simply set $$w_y=\ldots s_{\gamma_i}^{y_i}\ldots s_{\gamma_1}^{y_1}.$$

There is also an $\xi\in W$ mapping each $\gamma_i$ to $\gamma_{i+1}$ (defined in~\cite{fjlmm}). The characters $F_y$ satisfy a recurrent definition.
$$F_{(0,0,0,\ldots)}=\dfrac 1{\prod_{i\ge 1}(1-\exp\gamma_i)},$$
\begin{equation}\label{rec}
F_{(y_1,y_2,\ldots)}=\dfrac{\xi F_{(y_2,y_3,\ldots)}}{1-\exp(w_{(y_1,y_2,\ldots)}\gamma_1)},
\end{equation}
with the Weyl group acting on characters in the natural way.

We are finally ready for the character formula.
\begin{theorem}\label{char}
$$\charac S=\exp(-\lambda)\sum\limits_{y\in\Theta} \exp(w_y\lambda) F_y.$$
\end{theorem}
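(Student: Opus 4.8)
The plan is to read $\charac S$ as the generating function for the lattice points of a polyhedron and to evaluate it, vertex by vertex, via Brion's theorem. To a monomial $p\in\Pi$ attach its exponent sequence $x(p)\in\mathbb Z_{\ge 0}^{\infty}$ (finitely supported); since $p$ is a weight vector whose weight relative to $\lambda$ is read off from $x(p)$ through the $\gamma_i$, one obtains
$$\charac S=\sum_{x}\ \prod_{i\ge 1}\exp(\gamma_i)^{x_i},$$
the sum ranging over all finitely supported $x\in\mathbb Z_{\ge 0}^{\infty}$ obeying A and B, i.e.\ over the lattice points of the polyhedron $\mathcal Q=\{x:\ x_i\ge 0,\ \psi_i(x)\le b_i\ \text{for all }i\}$. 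Equivalently, $\charac S$ is the image of the formal generating function $G_{\mathcal Q}(z)=\sum_{x\in\mathcal Q\cap\mathbb Z^{\infty}}z^{x}$, in independent variables $z_i$ (one per coordinate), under the specialization $z_i\mapsto\exp(\gamma_i)$. This specialization is highly non-injective: by $\gamma_{q(n-1)+r}=\gamma_r-q\delta$ the span of all the $\gamma_i$ is only $n$-dimensional, so distinct extremal rays of a cone may acquire the same image weight and specialized cone generating functions will in general pick up higher-order poles --- already the factor $(1-\exp\gamma_3)^{-2}$ occurs in $F_{(1,1,0,0,\dots)}$ for $n=2$. This is part of what makes it a ``variation'' of Brion's theorem.

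Because $\mathcal Q$ involves infinitely many coordinates, I would then work with the truncations $\mathcal Q_M$ obtained by setting $x_i=0$ for $i>M$, together with $S_M\subseteq S$ spanned by the monomials supported on $\{1,\dots,M\}$, so that $\charac S=\lim_{M\to\infty}\charac S_M$ in the natural graded completion of the ring of characters. Each $\mathcal Q_M$ is an honest bounded polytope, so Brion's theorem applies without reservation: $G_{\mathcal Q_M}(z)=\sum_v z^{v}\sigma_v(z)$, the sum over vertices $v$ of $\mathcal Q_M$ and $\sigma_v$ the generating function (a rational function) of the tangent cone at $v$ translated to the origin. One then specializes $z_i\mapsto\exp(\gamma_i)$ and lets $M\to\infty$; justifying this passage --- interchanging $\lim_M$ with an infinite sum of rational functions, and checking that the ``spurious'' vertices appearing near the boundary $i=M$ contribute terms that vanish in the completion --- is the analytic part of the work.

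The geometric heart is the dictionary between the vertices of $\mathcal Q$ (equivalently, the stable vertices of $\mathcal Q_M$ for large $M$) and the good sequences $\Theta$. A good $y$ is the same datum as a tuple $(m_1,\dots,m_{n-1})$ of nonnegative integers, via $y_{q(n-1)+r}=1\iff q<m_r$; to it I would attach the vertex $v_y$ produced by the greedy sweep that pushes the coordinates indexed by $\operatorname{supp}(y)$, in increasing order, each until a constraint of type A or B becomes tight. I would then prove, by induction along the recursion \eqref{rec} (obtained by peeling off $y_1$), two facts: that the apex $\sum_i(v_y)_i\gamma_i$ is, up to the overall sign fixed by the weight conventions, the weight $w_y\lambda-\lambda$, and that the tangent cone generating function $\sigma_{v_y}$, after the specialization $z_i\mapsto\exp\gamma_i$, equals $F_y$. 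The inductive step rests on the identity $w_{(y_1,y_2,\dots)}=\xi\,w_{(y_2,y_3,\dots)}\,\xi^{-1}s_{\gamma_1}^{\,y_1}$ together with the geometric fact that, under the coordinate shift implemented by $\xi$, the tangent cone at $v_{(y_1,y_2,\dots)}$ is the product of the $\xi$-image of the tangent cone at $v_{(y_2,y_3,\dots)}$ with one new extremal ray, whose primitive generator has weight $w_{(y_1,\dots)}\gamma_1$; on specialized generating functions this product-and-shift is precisely \eqref{rec}, while tracking the apex is a short computation with the pairings $\langle\lambda,\gamma_i^\vee\rangle$. One must also verify that the greedy point is genuinely a vertex, that the good sequences exhaust the stable vertices, and that the tangent cones are simplicial --- the repeated factors in the $F_y$ then being an artefact of the non-injective specialization rather than of the cones themselves.

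Putting the pieces together, for each $M$ Brion plus the dictionary give $\charac S_M=\sum_{y}\exp(w_y\lambda-\lambda)\,F_y^{(M)}$ over good $y$ supported in $\{1,\dots,M\}$, with $F_y^{(M)}$ the truncated specialized cone generating function, and the limit $M\to\infty$ yields $\charac S=\exp(-\lambda)\sum_{y\in\Theta}\exp(w_y\lambda)\,F_y$, which is the assertion. I expect the main obstacle to be the combinatorial and Lie-theoretic identification in the previous paragraph --- pinning down the vertices of the shifted-window polytope $\mathcal Q$, controlling the behaviour of its tangent cones under the rotation $\xi$, and matching them to the recursively defined $F_y$ --- with making the infinite-dimensional form of Brion's theorem (together with the non-injective specialization) rigorous a close second.
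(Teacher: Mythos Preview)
Your high-level plan matches the paper's: read $\charac S$ as the integer-point generating function of the polytope $\Pi$, truncate to finite sections $\Pi_m$, apply Brion's theorem vertex by vertex, and pass to the limit. The identification of the simple vertices with good sequences and the recursive matching of their tangent-cone contributions with the $F_y$ via the shift $\xi$ and relation~\eqref{rec} is likewise what the paper does (Proposition after Theorem~\ref{nonsimp}). Your worry about ``spurious'' boundary vertices is in fact not an issue: the vertices of $\Pi_m$ are exactly the vertices of $\Pi$ with $m_u\le m$, so the truncation is clean.

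There is, however, a genuine gap. You propose to verify ``that the good sequences exhaust the stable vertices, and that the tangent cones are simplicial''. Both statements are \emph{false}. The polytope $\Pi$ has non-simple vertices---those lying simultaneously in $\Gamma_i$ and $\Delta_i$ for some $i$ (Proposition~\ref{vert}(a,c))---and these are precisely the vertices \emph{not} indexed by good sequences. The heart of the paper's computation is Theorem~\ref{nonsimp}: each non-simple vertex contributes $\tau_u=0$. This vanishing is not formal. The proof relies on a cone-decomposition identity (Theorem~\ref{hyperp}, itself extracted from the Brianchon--Gram relation) which slices the non-simplicial vertex cone by the hyperplane $x_1=\const$, reducing by induction on $l_u$; at the base one must exhibit an explicit cancellation between two simplicial pieces $C_{j_1}$ and $C_{j_2}$. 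Without this result your sum over $y\in\Theta$ simply omits nonzero Brion summands, and the argument collapses.

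A secondary omission: when some $a_i=0$ the facets $\Gamma_i,\Delta_i$ degenerate, non-simple vertices can carry nonzero $\tau_u$, and the paper needs a further reduction (Section~5, Theorem~\ref{sing}) grouping the vertices of the degenerate polytope via a surjection $\Omega_\varnothing\to\Omega_{A_\lambda}$ to recover the regular-weight answer.
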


The approach in~\cite{fjlmm} is to show that both the left- and the right-hand side are the sole solution of the same recurrence equation. Ours, on the other hand, makes use of the following observation. The set of sequences $x(p), p\in\Pi$ can be viewed as the set of integral points of a convex polytope in countable dimensional space. This polytope will also be denoted as $\Pi$. The character $\charac S$ is, in turn, the sum of certain exponents of these integral points. Such sums for finite dimensional polytopes can be computed via Brion's theorem. It'll be shown that a similar identity holds for $\Pi$ and thus gives a formula for $\charac S$.

\section{Brion's Theorem}

Consider the space $\mathbb{R}^m$ with a fixed basis determining the subset $\mathbb{Z}^m$ of integer points. To each such point we associate its {\it exponent}, the Laurent monomial $\exp(x)=t_1^{x_1}\ldots t_m^{x_m}$ in formal variables $t_1,\ldots,t_m$. For a subset $\Sigma\subset \mathbb{R}^m$ we define its {\it generating function} as the Laurent series $$S(\Sigma)=\sum\limits_{x\in\Sigma\cap\mathbb{Z}^m} \exp(x).$$

Now let $C$ be a {\it rational polyhedral cone}, a cone with rational vertex and finite set of rational generators. The series $S(C)$ can be viewed as a rational function in the following sense. Considering the $\mathbb{Z}\lbrack t_1^{\pm 1},\ldots,t_m^{\pm 1}\rbrack$-module structure on the space of Laurent series, it can be seen that there exists a polynomial $\theta$ such that $\theta S(C)$ is also a polynomial (i.e. has a finite number of nonzero coefficients). Moreover, the rational function $\tfrac{\theta S(C)}{\theta}$ does not depend on the choice of $\theta$ and will be referred to as $\sigma(C)$.

A more detailed description of $\sigma(C)$ will be useful to us. First, let $C$ be a simplicial cone with vertex $v$ and linearly independent set of integral generators $u_1,\ldots,u_l$. Each of these generators is chosen to be minimal, i.e. have setwise coprime coordinates. The set $\Sigma$ of points $$v+\sum_{i=1}^l \alpha_i u_i,\text{ all }\alpha_i\in\lbrack 0,1)$$ is the {\it fundamental parallelepiped} of $C$. In these terms one has
\begin{equation}
\sigma(C)=\dfrac {\sum\limits_{x\in\Sigma\cap\mathbb{Z}^m} \exp x}{(1-\exp u_1)\ldots(1-\exp u_l)}.
\end{equation}

A non-simplicial cone $C$ can be triangulated into simplicial ones. The function $\sigma(C)$ can then be expressed by the arising inclusion-exclusion formula. An important case is the case of a {\it degenerate} cone, a cone containing an affine line (any affine space or semispace, for instance). For a degenerate rational cone one has $\sigma(C)=0$.

A detailed discussion of these subjects can be found in the book~\cite{beckrob}.

Finally, let $\Sigma$ be a rational polyhedron, the convex hull of a finite set of rational points. Its vertex cones $C_i$ are the cones with vertices in the vertices of $\Sigma$ and generators given by the edges of $\Sigma$ adjacent to the corresponding vertex. The observation that in this case the series $S(\Sigma)$ is actually a polynomial and thus a rational function lets us state
\begin{theorem}[M. Brion, \cite{bri}] In the above notation the identity $$S(\Sigma)=\sum\limits_i \sigma(C_i)$$ holds in the field of rational functions. \end{theorem}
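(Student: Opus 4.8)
The plan is to derive the identity from a purely combinatorial decomposition of the indicator function of $\Sigma$, combined with the two properties of $\sigma$ already recorded above: the explicit formula for simplicial cones (hence, via triangulation, for all rational cones) and the vanishing $\sigma(C)=0$ for degenerate cones. First I would note that since $\Sigma$ is bounded, the series $S(\Sigma)$ has only finitely many nonzero terms, so it is a Laurent polynomial; in particular it is already a rational function, and in the sense of the present notation $\sigma(\Sigma)=S(\Sigma)$. It therefore suffices to prove $\sigma(\Sigma)=\sum_i\sigma(C_i)$, and for this I would work with rational functions attached to \emph{all} rational polyhedra, not just cones.

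The geometric heart of the argument is the Brianchon--Gram relation for indicator functions. For a nonempty face $F\preceq\Sigma$ let $T_F$ denote the tangent cone of $\Sigma$ along $F$ (the cone of feasible directions, with apex a relative-interior point of $F$), so that $T_F=C_v$ when $F=\{v\}$ is a vertex. The relation asserts
$$[\Sigma]=\sum_{\emptyset\ne F\preceq\Sigma}(-1)^{\dim F}\,[T_F]$$
as integer-valued functions on $\mathbb{R}^m$. I would prove this by induction on $\dim\Sigma$ (cutting $\Sigma$ by a hyperplane through a chosen vertex), or by the standard evaluation of both sides at an arbitrary point $x$, which reduces to an Euler-characteristic identity for the subcomplex of faces whose tangent cone contains $x$.

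Next I would upgrade $\sigma$ to a valuation: the assignment $[C]\mapsto\sigma(C)$ on indicators of rational cones extends to a linear map from the subspace they span, together with indicators of rational polyhedra, inside the space of functions $\mathbb{R}^m\to\mathbb{Z}$, to the field of rational functions, agreeing with $S(P)$ whenever that series is a polynomial. This existence statement --- the Lawrence--Khovanskii--Pukhlikov theorem --- I would invoke or reprove in the special case at hand, by triangulating each polyhedron appearing in the Brianchon--Gram relation into simplicial cones (possibly shifted by subspaces) and checking that the value is independent of the triangulation. Granting this, apply $\sigma$ termwise to the relation above. For every face $F$ with $\dim F\ge 1$ the cone $T_F$ contains the affine hull of $F$, hence contains a line, so $\sigma(T_F)=0$ by the property recalled in the text; only the vertex terms survive, yielding $\sigma(\Sigma)=\sum_v\sigma(C_v)=\sum_i\sigma(C_i)$. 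Together with $\sigma(\Sigma)=S(\Sigma)$ this is precisely Brion's identity.

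The main obstacle is the passage from ``$\sigma$ is defined on cones'' to ``$\sigma$ is a linear valuation on indicator functions of polyhedra'': the naive series $S(P)$ diverges exactly for the degenerate cones that we must annihilate, so one cannot manipulate the decomposition term by term at the level of series, and a genuine argument via common refinements of triangulations (equivalently, via the polytope algebra) is needed. By comparison, the Brianchon--Gram relation is elementary once the tangent cones are correctly set up, and the vanishing on degenerate cones is already available.
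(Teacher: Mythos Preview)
The paper does not prove this theorem at all: it is stated with attribution to Brion~\cite{bri} and used as a black box, with the reader referred to~\cite{beckrob} for background. There is therefore no ``paper's own proof'' to compare against.

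Your sketch is nonetheless a correct outline of one of the standard proofs. The Brianchon--Gram decomposition of $[\Sigma]$ into alternating tangent-cone indicators, followed by application of a valuation that agrees with $\sigma$ on cones and kills degenerate ones, is exactly the route taken in, e.g., Khovanskii--Pukhlikov or in Barvinok's exposition. You have also correctly located the one genuinely nontrivial step: the existence of such a valuation (equivalently, that $\sigma$ is well-defined on the polytope algebra), which cannot be obtained by naive termwise manipulation of generating series since the relevant series diverge precisely on the cones one wants to annihilate. If you were to write this up in full, that is where the work would go; the rest is routine.

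It is worth noting that the paper does invoke the Brianchon--Gram identity explicitly, but only later, inside the proof of Theorem~\ref{decomp}, and for a somewhat different purpose (comparing $\mathbbm{1}_C$ with an alternating sum over faces of a cone rather than of a polytope). So your approach is in the same spirit as the tools the paper deploys, even though the paper itself simply imports Brion's theorem without argument.
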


Let us now prove another identity, closely related to Brion's theorem. We generalize the notion of vertex cones by defining an associated cone $C_\Gamma$ for face $\Gamma$ of arbitrary dimension of polytope $\Sigma$ in $\mathbb{R}^m$. (We use the term "polytope" to refer to the intersection of any finite set of semispaces, a polyhedron is a bounded polytope.)  Let $x$ be a point in the interior of $\Gamma$, then the cone is defined as $$C_\Gamma=\{x+\alpha(y-x), y\in\Sigma, \alpha\ge 0\}.$$ Note that $C_\Gamma$ is degenerate whenever $\dim\Gamma>0$.

\begin{theorem}\label{decomp}
Consider a rational cone $C$ as well as the cones $C_\Gamma$ defined above for all faces of $C$. Also, let $D$ be a rational cone with the same vertex as $C$, let $D$ contain $C$ and, furthermore, suppose that $D$ is not all of $\mathbb{R}^m$. Then  $$\sigma(C)=\sum\limits_{\dim\Gamma>0}(-1)^{\dim\Gamma+1}\sigma(C_\Gamma\cap D).$$
\end{theorem}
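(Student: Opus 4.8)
The plan is to reduce the identity to a pointwise statement about the generating functions $S(\cdot)$ and then pass to rational functions, exploiting the fact that $\sigma$ vanishes on degenerate cones. First I would dispose of a degenerate ambient cone: if $C$ itself is degenerate then $\sigma(C)=0$, and I claim every term on the right vanishes too, since $C_\Gamma\cap D$ contains the affine line (or space) inside $C$ translated to $x$, hence is degenerate and $\sigma(C_\Gamma\cap D)=0$. So we may assume $C$ is pointed, with a genuine vertex $v$. By translating we may take $v=0$.

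Next I would set up an inclusion-exclusion at the level of integer-point sets. Fix a lattice point $p\in D\cap\mathbb{Z}^m$. I want to show that the number of faces $\Gamma$ of $C$ with $\dim\Gamma>0$ such that $p\in C_\Gamma$, counted with sign $(-1)^{\dim\Gamma+1}$, equals the indicator $[p\in C]$ — or rather, since $p$ need not lie in $C$ at all, I want the signed count to equal $[p\in C]$ for \emph{all} $p\in D$, because the restriction to $D$ is exactly what makes the bookkeeping finite and the cancellation work. Concretely, $C_\Gamma\cap D$ is the set of $y\in D$ lying in the cone over $\Sigma=C$ from a relative-interior point of $\Gamma$; as $\Gamma$ ranges over positive-dimensional faces, these cones form a cover of $D\setminus\{0\}$ whose nerve-type alternating sum collapses. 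The cleanest route is to observe that the cones $C_\Gamma$ for \emph{all} faces $\Gamma$ (including $\Gamma=C$ when $C$ is full-dimensional, and the vertex $\Gamma=\{0\}$ with $C_{\{0\}}=C$) give a decomposition in which, for each $p\neq 0$, the faces $\Gamma$ with $p\in\mathrm{relint}\,C_\Gamma$ form exactly one chain; then a standard Euler-characteristic-of-a-simplex computation gives the alternating sum $1$, and separating off the $0$-dimensional face $\{0\}$ (whose cone is $C$ itself, contributing $\sigma(C)$) yields the claimed formula after intersecting everything with $D$ to keep the series convergent.

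I would then translate this set-level identity into the language of $\sigma$. Summing $\exp(p)$ over $p\in D\cap\mathbb{Z}^m$ with the signed face-counts gives, on one side, $\sum_{\dim\Gamma>0}(-1)^{\dim\Gamma+1}S(C_\Gamma\cap D)$, and on the other side $S(C)$ — because the signed count of positive-dimensional faces containing $p$ equals $[p\in C]$ for $p\in D$, while for the single $0$-dimensional face we get $S(C)$ with the opposite sign, i.e. $S(C)=S(D)-\big(S(D)-S(C)\big)$ reorganized via the chain argument. Each $S(C_\Gamma\cap D)$ is the generating function of a rational polytope, hence represents a rational function $\sigma(C_\Gamma\cap D)$; passing to the field of rational functions (legitimate because all the series involved are annihilated by a common polynomial $\theta$, e.g. a product of $1-\exp u$ over the finitely many generators appearing) turns the series identity into the stated identity among the $\sigma$'s, with the degenerate $\sigma(C_\Gamma\cap D)$ for $\dim\Gamma>0$ being genuinely nonzero in general only because the intersection with $D$ can cut the degenerate cone back down — wait, no: $C_\Gamma\cap D$ still contains a line when $\dim\Gamma>0$ unless $D$ clips it, so the hypothesis $D\neq\mathbb{R}^m$ is what prevents all terms from trivially vanishing and makes the identity have content.

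The main obstacle will be the bookkeeping in the chain/Euler-characteristic step: making precise that for each lattice point $p$ the set of faces $\Gamma$ with $p$ in the relative interior of $C_\Gamma$ is a single interval in the face lattice, so that its alternating sum of $(-1)^{\dim\Gamma}$ is $0$ for $p\neq v$ and the boundary term survives. One must also be careful that $C_\Gamma\cap D$ really is a rational polytope in the paper's sense (a finite intersection of half-spaces) so that $\sigma$ is defined — this is where the hypothesis that $D$ is a rational cone and $C$ rational polyhedral is used — and that the common denominator $\theta$ can indeed be chosen uniformly across the finitely many faces, which follows since $C$ has finitely many faces and each contributes finitely many edge generators.
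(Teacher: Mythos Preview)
Your overall architecture is correct and matches the paper's: establish a pointwise identity of characteristic functions, sum over lattice points to get an identity of generating series, then pass to rational functions. The degenerate case is handled correctly, and the final passage from $S$ to $\sigma$ is fine.

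The gap is in the heart of the argument, the characteristic-function identity. Your claim that for each $p\neq 0$ the faces $\Gamma$ with $p\in\mathrm{relint}\,C_\Gamma$ ``form exactly one chain'' is false. Take $C$ to be the first quadrant in $\mathbb{R}^2$: for $p=(1,1)$ the relative interiors of \emph{all four} tangent cones (vertex, two edges, $C$ itself) contain $p$, and the two edges are incomparable. What is true is that $\{\Gamma : p\in C_\Gamma\}$ is an \emph{upper set} in the face lattice, and the alternating sum over it is what you need to control; but this is exactly the Brianchon--Gram theorem, not a simplex Euler-characteristic count. More importantly, the identity $\mathbbm{1}_C=\sum_{\dim\Gamma>0}(-1)^{\dim\Gamma+1}\mathbbm{1}_{C_\Gamma}$ is \emph{not} valid on all of $\mathbb{R}^m$: it fails precisely on $\hat C$, the interior of $-C$ (there only the top face contributes, giving $(-1)^{m+1}\neq 0$). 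You never isolate this bad region, and your discussion of why the hypothesis $D\neq\mathbb{R}^m$ matters is off target: its real role is to force $D\cap\hat C=\varnothing$, so that multiplying both sides by $\mathbbm{1}_D$ lands you in the region where the identity does hold. (If $D$ met $\hat C$ it would, being a convex cone containing the full-dimensional $C$, have to be all of $\mathbb{R}^m$.)

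The paper's proof makes both of these points explicit. It states as a lemma that $\mathbbm{1}_C$ and $\Phi=\sum_{\dim\Gamma>0}(-1)^{\dim\Gamma+1}\mathbbm{1}_{C_\Gamma}$ agree on the complement of $\hat C$, and proves it by slicing: for $x\notin\hat C$ one chooses a hyperplane through $x$ meeting $C$ in a \emph{bounded} polytope $P$ of dimension $m-1$; the positive-dimensional faces $\Gamma$ of $C$ correspond to the faces $\Gamma\cap P$ of $P$ with a dimension drop of one, and the tangent cones match up, so the desired equality at $x$ becomes the classical Brianchon--Gram identity for $P$. Then $D\cap\hat C=\varnothing$ gives $\mathbbm{1}_C=\mathbbm{1}_D\Phi=\sum(-1)^{\dim\Gamma+1}\mathbbm{1}_{C_\Gamma\cap D}$, and the passage to $\sigma$ is as you describe. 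If you want to salvage your route, replace the chain claim by a direct appeal to Brianchon--Gram (or an honest Euler-characteristic computation for the filter $\{\Gamma:p\in C_\Gamma\}$), and add the observation that $D$ misses $\hat C$.
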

\begin{proof}
If $C$ is degenerate then so is $D$ and all of the $C_\Gamma\cap D$ and both sides are equal to zero. We now suppose $C$ to be non-degenerate.

By $\mathbbm{1}_\Sigma$ we denote the characteristic function of $\Sigma\subset\mathbb{R}^m$. Let $\hat C$ be the interior of the image of $C$ under symmetry across its vertex. One sees that $\hat C$ is precisely the set of points not contained in any $C_\Gamma$ other than $C_C$. The main ingredient of our proof is the following lemma.

\begin{lemma}
The functions $\mathbbm{1}_C$ and $$\Phi=\sum\limits_{\dim\Gamma>0}(-1)^{\dim\Gamma+1}\mathbbm{1}_{C_\Gamma}$$ coincide on the complement of $\hat C$.
\end{lemma}
\begin{proof}[Proof of the lemma.]
Obviously both functions are zero outside of the affine hull of $C$. Thus we may assume the affine hull to be the whole space $\mathbb{R}^m$.

For a point $x\not\in\hat C$ there are two possibilities.
\begin{enumerate}
\item The point is not contained in the boundary of the open set $\hat C$. Visibly, since $C$ is non-degenerate, $x$ is contained in a hyperplane whose intersection with $C$ is a (bounded!) polyhedron of dimension $m-1$. The equality $\Phi(x)=\mathbbm{1}_C(x)$  is then the classical Brianchon-Gram identity for the aforementioned polyhedron (cf.~\cite{brigr}).
\item Now, if $x$ is contained in the boundary of $\hat C$, let $y$ be the image of $x$ under symmetry across the vertex. $C_\Gamma$ contains $x$ iff $y$ is contained in $\Gamma$ implying $\mathbbm{1}_{C_\Gamma}(x)=\mathbbm{1}_{C_\Gamma}(y)$. Thus either $y\neq x$ and we have reduced to the previous case, or $y=x$ is the vertex and the equality once again follows from Brianchon-Gram.
\end{enumerate}
\end{proof}
(Actually, the lemma is just as valid for degenerate cones.)

Cone $D$ does not intersect $\hat C$ and thus the lemma implies: $$\mathbbm{1}_C=\mathbbm{1}_D\mathbbm{1}_C=\mathbbm{1}_D\sum\limits_{\dim\Gamma>0}(-1)^{\dim\Gamma+1}\mathbbm{1}_{C_\Gamma}=\sum\limits_{\dim\Gamma>0}(-1)^{\dim\Gamma+1}\mathbbm{1}_{C_\Gamma\cap D}.$$ It follows immediately that $$S(C)=\sum\limits_{\dim\Gamma>0}(-1)^{\dim\Gamma+1}S(C_\Gamma\cap D)$$ wherefrom the statement of the theorem is obvious.
\end{proof}

We're primarily interested in the case of $D$ being a semispace. In this case $\dim\Gamma>1$ implies that $C_\Gamma\cap D$ is degenerate and $\sigma(C_\Gamma\cap D)=0$. Moreover, if $u$ is an edge of $C$ which is contained in the bounding hyperplane of $D$, then $C_u\subset D$ and again $\sigma(C_u)=0$. We obtain
\begin{theorem}\label{hyperp}
Consider a rational polyhedral cone $C$ with the cones $C_u$ defined at each of its edges $u$. Let $\beta$ be a hyperplane containing the vertex of $C$ but none of its interior points. Let $D$ stand for the semispace bounded by $\beta$ and containing $C$. Then $$\sigma(C)=\sum\limits_{u\not\subset\beta}\sigma(C_u\cap D).$$
\end{theorem}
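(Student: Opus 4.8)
The plan is to deduce Theorem~\ref{hyperp} directly from Theorem~\ref{decomp} by specializing $D$ to the half-space determined by $\beta$ and then killing all the degenerate summands. I would start from the identity
$$\sigma(C)=\sum\limits_{\dim\Gamma>0}(-1)^{\dim\Gamma+1}\sigma(C_\Gamma\cap D),$$
which applies since $\beta$ contains the vertex of $C$ but no interior point of $C$, so $D$ contains $C$ and $D\ne\mathbb{R}^m$. The sum runs over all positive-dimensional faces $\Gamma$ of $C$; I would organize them by dimension and show that only the one-dimensional faces (the edges) contribute.

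\textbf{Step 1: faces of dimension $\ge 2$ drop out.} If $\dim\Gamma>1$, then $C_\Gamma$ already contains the affine hull of $\Gamma$, hence an affine plane through the vertex; since $D$ also contains that plane (its bounding hyperplane $\beta$ passes through the vertex and $D\supset C\supset\Gamma$, and in fact one checks $\Gamma\not\subset\beta$ forces $\beta\cap\mathrm{aff}(\Gamma)$ to be a line so $\mathrm{aff}(\Gamma)\not\subset D$ only if\ldots)—more carefully, $C_\Gamma\cap D$ still contains a full line (the line through the vertex inside $\mathrm{aff}\,\Gamma$, which lies in $D$ because $D$ is a half-space whose boundary passes through the vertex and $\Gamma$ meets the open half-space). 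Hence $C_\Gamma\cap D$ is degenerate and $\sigma(C_\Gamma\cap D)=0$ by the vanishing result quoted for degenerate rational cones. This is the step that needs the most care: I must make sure that for $\dim\Gamma\ge 2$ the cone $C_\Gamma\cap D$ genuinely contains a line and is not merely unbounded, and this uses that $\beta$ contains the vertex, so any line through the vertex inside $\mathrm{aff}\,\Gamma$ that is not contained in $\beta$ still has one of its two rays in the open half-space while the cone $C_\Gamma$, being the cone over all of $\Sigma=C$ from an interior point of $\Gamma$, contains that whole line.

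\textbf{Step 2: among edges, those lying in $\beta$ drop out.} If $u$ is an edge of $C$ with $u\subset\beta$, then picking $x$ in the relative interior of $u$, the cone $C_u=\{x+\alpha(y-x):y\in C,\ \alpha\ge 0\}$ contains the line $\mathrm{aff}(u)\subset\beta$; intersecting with $D$ keeps this line (it lies in $\partial D$), so $C_u\cap D$ is degenerate and $\sigma(C_u\cap D)=0$.

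\textbf{Step 3: assemble.} After Steps 1 and 2 the only surviving terms are the edges $u\not\subset\beta$, each with sign $(-1)^{\dim\Gamma+1}=(-1)^{1+1}=+1$, giving
$$\sigma(C)=\sum\limits_{u\not\subset\beta}\sigma(C_u\cap D),$$
which is exactly the assertion. Note the degenerate case $C$ itself degenerate is already covered inside Theorem~\ref{decomp}, so nothing extra is needed there. I expect Step~1 to be the only real obstacle; Steps 2 and 3 are bookkeeping. One subtlety worth a sentence in the write-up: one should confirm that the faces $\Gamma$ of $C$ appearing in Theorem~\ref{decomp} and the edges $u$ of $C$ appearing in Theorem~\ref{hyperp} are the same objects, and that ``$C_u$ defined at each edge'' in the statement of Theorem~\ref{hyperp} is literally the $C_\Gamma$ of Theorem~\ref{decomp} with $\Gamma=u$, so that the two formulas can be matched term by term.
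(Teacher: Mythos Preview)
Your approach is exactly the paper's: specialize Theorem~\ref{decomp} to a half-space $D$ and observe that all terms with $\dim\Gamma>1$, and all edge terms with $u\subset\beta$, are degenerate and hence vanish. One comment on your Step~1: the argument as written is tangled because you are looking for a line in $C_\Gamma\cap D$ that is \emph{not} contained in $\beta$, which does not work (only one ray of such a line lies in $D$); the line you want is instead any line in $\beta\cap\mathrm{aff}(\Gamma)$, which has dimension at least $\dim\Gamma-1\ge 1$ since both $\beta$ and $\mathrm{aff}(\Gamma)$ pass through the vertex, and such a line lies in $C_\Gamma$ (because $\mathrm{aff}(\Gamma)\subset C_\Gamma$) and in $D$ (because $\beta\subset D$).
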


\section{The polytope $\Pi$}\label{poly}

Within the countable dimensional space of sequences of real numbers with finite support consider the subset of sequences whose terms are nonnegative and also satisfy conditions A and B from the introduction. This set is the polytope $\Pi$ mentioned above. In this part we overview the key properties of $\Pi$ and then prove an analog of Brion's theorem for the polytope.

First, let us introduce the notions of exponents, generating functions etc. in this countable dimensional setting. 

Just as in the finite dimensional case, for a sequence $x=(x_i)$ one may consider its exponent, a monomial in the set of variables $\{t_i\}$. Now, apply the substitution $t_{j(n-1)+r}=z_r q^j$ for all integers $j\ge 0$ and $1\le r\le n-1$, denote the result $\exp^*x$. Visibly, setting $\exp{\gamma_i}=z_i$ ($1\le i\le n-1$) and $\exp(-\delta)=q$ in the space of characters of $\hat L$ leads to the identity $\charac p=\exp^* x(p)$ for any monomial $p\in\mathcal{U}(P)$. This identity lets us view Laurent series in $z_1,\ldots,z_{n-1},q$ as characters.

We arrive at $S^*(\Pi)=\charac S$, where $S^*(\Sigma)$ is the sum of the $\exp^*$ exponents of integer points of $\Sigma$. (Clearly, the series $S^*(\Pi)$ is well-defined.)

Now, let $C$ be a finite dimensional rational cone in our countable dimensional space. For some $m>0$ the cone is contained in subspace $\{(x_i)|x_i=0\text{ when }i>m\}$ which lets us define the function $\sigma(C)$ in variables $\{t_i, 1\le i\le m\}$. This function is, clearly, independent of the choice of $m$ and, if the above substitution is applicable to it, we denote the result $\sigma^*(C)$.

Now we turn to the structure of $\Pi$.

We will refer to the intersections of $\Pi$ with the hyperplanes given by conditions $\psi_i(x)=b_i$ (see conditions A and B) and $x_i=0$ as its facets. We introduce the following notation: for $i>0$ let $\Gamma_i$ be the facet $\Pi\cap\{x|\psi_i(x)=b_i\}$ and $\Delta_i$ -- the facet $\Pi\cap\{(x_i)|x_i=0\}$. (At times we will take the liberty of referring to the hyperplanes themselves by the same symbols, but the intended meaning should be clear from the context.)

All of the defined sets are true facets of $\Pi$ (i.e. have codimension 1) iff all of the $a_i$ are nonzero. We will be working in the assumption that this is so throughout this and the following part, after which the case of a weight with a zero coordinate will be considered.

In analogy with finite dimensional polytopes, the vertices of $\Pi$ are understood as its points for which the set of facets containing it is maximal. Here are the main properties of the vertices.

\begin{propos}\label{vert}
~

(a) A vertex is contained simultaneously in both $\Gamma_i$ and $\Delta_i$ iff it is contained in both $\Gamma_{i-1}$ and $\Delta_{i-n}$. (In particular, $i>n$.)

(b) A point in $\Pi$ is a vertex iff for each $i>0$ it is contained in at least one of $\Gamma_i$ and $\Delta_i$.

(c) Consider a set of facets which for each $i$ contains at least one of $\Gamma_i$ and $\Delta_i$ and, furthermore, contains both of them iff it contains both $\Gamma_{i-1}$ and $\Delta_{i-n}$. If the set contains only a finite number of facets of the form $\Gamma_i$, then the intersection of all its elements is a single point which is a vertex of $\Pi$.
\end{propos}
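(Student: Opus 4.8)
The plan is to dispose of (a) directly from the defining inequalities and then extract (b) and (c) from a single forward recursion that reconstructs a point of $\Pi$ from a compatible choice of an active facet at each index. For (a) no vertex hypothesis is actually needed. When $i>n$ we have $b_i=b_{i-1}=k$ together with the identity $\psi_i(x)-\psi_{i-1}(x)=x_i-x_{i-n}$; so if $x\in\Pi$ lies in $\Gamma_i\cap\Delta_i$ then $\psi_{i-1}(x)=k+x_{i-n}$, which against $\psi_{i-1}(x)\le k$ forces $x_{i-n}=0$ and $\psi_{i-1}(x)=k$, i.e.\ $x\in\Gamma_{i-1}\cap\Delta_{i-n}$, and using the identity in the other direction gives the converse. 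When $i\le n$ the analogous computation yields $\psi_{i-1}(x)=b_i$, impossible since $\psi_{i-1}(x)\le b_{i-1}<b_i$ (this is where positivity of every $a_j$ enters), so $\Gamma_i\cap\Delta_i=\varnothing$ and the statement is vacuous — hence the parenthetical restriction to $i>n$.

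Now the recursion. Let $Y$ be any set of facets that for every $i$ contains at least one of $\Gamma_i,\Delta_i$. Define $x=x(Y)$ inductively on $i$: put $x_i=0$ if $\Delta_i\in Y$, and otherwise (so $\Gamma_i\in Y$) put $x_i=b_i-\sum x_j$, the sum over the indices $j<i$ that occur in $\psi_i$, which makes $\psi_i(x)=b_i$. If $Y$ contains only finitely many facets of the form $\Gamma_i$, then $x(Y)$ has finite support. The substance is two inductions on $i$. First, $x(Y)\in\Pi$: all earlier coordinates being nonnegative, a $\Delta$-step gives $\psi_i(x)\le\psi_{i-1}(x)\le b_{i-1}\le b_i$ with $x_i=0\ge 0$, and a $\Gamma$-step gives $x_i=b_i-\psi_{i-1}(x)+x_{i-n}\ge 0$ when $i>n$ and $x_i\ge b_i-b_{i-1}>0$ when $i\le n$. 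Second, $x(Y)$ satisfies the equation of every facet in $Y$, not only of the one used at each step; the one case to check is $\Gamma_i\in Y$ when the $\Delta_i$-equation was used, and by the ``both iff both'' hypothesis this forces $\Gamma_{i-1},\Delta_{i-n}\in Y$ with $i>n$, whence $\psi_i(x)=\psi_{i-1}(x)+x_i-x_{i-n}=k$ by induction. Consequently $x(Y)\in\Pi\cap\bigcap Y$; and since any point of $\bigcap Y$ obeys, at each index, an equation fixing its $i$-th coordinate in terms of the earlier ones, that point must be $x(Y)$, so $\bigcap Y=\{x(Y)\}$.

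Part (c) then falls out, its hypotheses being precisely those above: $\bigcap Y$ is the single point $x(Y)\in\Pi$, and since $Y$ meets $\Gamma_i\cup\Delta_i$ for every $i$ and sits inside the facet set of $x(Y)$, the ``if'' direction of (b) (below) certifies $x(Y)$ as a vertex. That ``if'' direction is itself an application of the recursion: if $x\in\Pi$ meets $\Gamma_i\cup\Delta_i$ for all $i$ and $x'\in\Pi$ has facet set containing that of $x$, then $x'$ obeys at each index the coordinate-fixing equation attached to that facet, so $x'=x$; hence the facet set of $x$ is maximal and $x$ is a vertex. For the ``only if'' direction, let $x$ be a vertex and suppose $x\notin\Gamma_{i_0}\cup\Delta_{i_0}$ for some $i_0$. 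Its facet set $Y$ contains only finitely many $\Gamma_i$ (since $\psi_i(x)\to 0$ while $b_i=k>0$ for large $i$) and, by (a), already satisfies ``both iff both''. Adjoin $\Delta_i$ to $Y$ for each index $i$ at which $x$ lies on neither $\Gamma_i$ nor $\Delta_i$; the result $Y'$ still satisfies every hypothesis of (c) — adjoining $\Delta$'s alters no ``both'' status and creates no new $\Gamma$'s — and $\Delta_{i_0}\in Y'\setminus Y$. By (c), $\bigcap Y'$ is a vertex whose facet set contains $Y'\supsetneq Y$, contradicting maximality; hence $x$ meets $\Gamma_i\cup\Delta_i$ for every $i$.

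The real work, and the step I expect to be delicate, is the pair of inductions in the second paragraph: keeping the feasibility estimates and the ``both''-index consistency straight across the break $i=n$, where the formulas for $\psi_i$ and $b_i$ change shape. Everything around it — part (a), the ``if'' half of (b), and the facet-set completion yielding the ``only if'' half — is routine once the recursion and its two properties are in place.
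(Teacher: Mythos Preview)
Your approach is essentially the paper's, only fleshed out: the paper also reconstructs a point from a chosen facet at each index (its subset $B\subset A$ in (c)) and then checks by induction on $i$ that the remaining facet equations hold, and for the ``only if'' half of (b) it likewise adjoins $\Delta_l$ and invokes (a). Your explicit recursion $x(Y)$ and the two inductions make precise what the paper leaves to the reader.

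One small imprecision: in the ``only if'' argument for (b) you assert that the enlarged set $Y'$ ``still satisfies every hypothesis of (c)'', justifying this by ``adjoining $\Delta$'s alters no `both' status and creates no new $\Gamma$'s''. That secures only the forward implication of the ``both iff both'' clause. The reverse implication can genuinely fail: if $j$ is a ``neither'' index and $\Gamma_{j+n-1}\in Y$, then after adjoining $\Delta_j$ you have $\Gamma_{(j+n)-1},\Delta_{(j+n)-n}\in Y'$ without both $\Gamma_{j+n},\Delta_{j+n}$ in $Y'$. (Concretely, for $n=2$, $a_0=a_1=2$, the point $x=(1,3,\tfrac12,0,0,\ldots)\in\Pi$ gives $Y'=\{\Delta_1,\Gamma_2,\Delta_3,\Delta_4,\ldots\}$, which violates the reverse direction at $i=3$.) This is harmless for your argument, since your own second induction --- the only place the ``both'' hypothesis enters --- uses \emph{only} the forward direction: if $\{\Gamma_i,\Delta_i\}\subset Y'$ then $\{\Gamma_{i-1},\Delta_{i-n}\}\subset Y'$. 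So instead of citing (c), invoke the recursion directly for $Y'$: $x(Y')\in\Pi$, $x(Y')$ lies on every facet in $Y'\supsetneq Y$, and hence is a vertex by the ``if'' half of (b), contradicting maximality. With that adjustment the proof is complete and matches the paper's route.
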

\begin{proof}
~\nopagebreak

(a) If $u\in\Gamma_i\cap\Delta_i$, then $\psi_{i-1}(u)=\psi(u)+u_{i-n}=b_i+u_{i-n}$. Therefore $b_{i-1}\le b_i$ implies $u_{i-n}=0$ (i.e. $u\in\Delta_{i-n}$) and $\psi_{i-1}(u)=b_{i-1}$ (i.e. $u\in\Gamma_{i-1}$).

(b) Let $u=(u_i)$ be a vertex, $u\not\in\Gamma_l$, $u\not\in\Delta_l$. From (a) one deduces that there exists a point in $\Pi$ belonging to all facets containing $u$ as well as facet $\Delta_l$, which contradicts the maximality. The converse follows form the fact that the facets of a set containing $\Gamma_i$ or $\Delta_i$ for each $i$ have no more than one common point.

(c) Let $A$ be such a set, $B\subset A$ contain exactly one of $\Gamma_i$ and $\Delta_i$ for each $i$. The facets in $B$ have exactly one common point $u=(u_i)$. Via induction on $i$ it is easily verified that $X_i\in A\Rightarrow u\in X_i$ for all $i>0$ and $X\in\{\Gamma,\Delta\}$.

\end{proof}

The edges of $\Pi$ are its subsets of more than one point for which the set of facets containing it is maximal (amongst such subsets with more than one point). Each edge is a segment the endpoints of which are vertices. For an edge $e$ and its endpoint $u$ we define the corresponding direction vector as the minimal integral vector $\varepsilon$ such that $u+\varepsilon\in e$.

\begin{propos}\label{edge}
In the above notation the following holds.

(a)  An edge is contained in both $\Gamma_i$ and $\Delta_i$ iff it is contained in both $\Gamma_{i-1}$ and $\Delta_{i-n}$.

(b) The edge $e$ is contained in at least one of the facets $\Gamma_i$ and $\Delta_i$ for all $i$ except for one.

(c) The monomial $\exp^*\varepsilon$ is of the form $\exp\alpha$ for some root of $\hat L$.
\end{propos}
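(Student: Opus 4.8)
The plan is to establish the three parts in turn. Part (a) will be a line‑for‑line analogue of the computation used for Proposition~\ref{vert}(a); part (b) will follow quickly from (a) together with the maximality built into the definition of an edge; and part (c) — the substantive point — will come from classifying the possible direction vectors.

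For part (a): if the edge $e$ lies in both $\Gamma_i$ and $\Delta_i$, then every $u\in e$ has $u_i=0$, so $\psi_{i-1}(u)=\psi_i(u)+u_{i-n}=b_i+u_{i-n}$. As $\psi_{i-1}(u)\le b_{i-1}\le b_i$, this is possible only when $b_{i-1}=b_i$ (which forces $i>n$), and then $u_{i-n}=0$ and $\psi_{i-1}(u)=b_{i-1}$, i.e.\ $u\in\Gamma_{i-1}\cap\Delta_{i-n}$; since $u\in e$ was arbitrary, $e\subseteq\Gamma_{i-1}\cap\Delta_{i-n}$. Conversely, from $e\subseteq\Gamma_{i-1}\cap\Delta_{i-n}$ one gets $\psi_i(u)=\psi_{i-1}(u)-u_{i-n}+u_i=b_{i-1}+u_i\le b_i=b_{i-1}$, forcing $u_i=0$ and $\psi_i(u)=b_i$.

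For part (b): were $e$ contained in $\Gamma_i$ or $\Delta_i$ for every $i$, then the collection $\mathcal E$ of facets containing $e$ would satisfy the hypotheses of Proposition~\ref{vert}(c) — it contains one of $\Gamma_i,\Delta_i$ for each $i$; by (a) it contains both precisely when it contains $\Gamma_{i-1}$ and $\Delta_{i-n}$; and it contains only finitely many $\Gamma_i$, since a point of $e$ has finite support, so $\psi_i<b_i=k$ on $e$ for large $i$ — hence $\bigcap_{X\in\mathcal E}X$ would be a single point, which is impossible. Thus some index is exceptional. For uniqueness, note that by maximality $e=\Pi\cap\bigcap_{X\in\mathcal E}X$, so it is enough to produce, under the assumption of two exceptional indices, a vertex of $\Pi$ lying in every facet of $\mathcal E$ but distinct from both endpoints of $e$ — this would wrongly place a third vertex in $e$. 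Viewing $\mathcal E$ as a partial assignment of facets (empty at the exceptional indices, one or both of $\Gamma_i,\Delta_i$ elsewhere), one checks that with two exceptional indices it admits a completion to a full admissible assignment, in the sense of Proposition~\ref{vert}(c), that disagrees with the two endpoint‑assignments at an exceptional index; Proposition~\ref{vert}(c) converts it into the required vertex.

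For part (c): by (b) the direction vector $\varepsilon$ satisfies $\psi_i(\varepsilon)=0$ or $\varepsilon_i=0$ for every $i\neq i_0$, where $i_0$ is the unique exceptional index. I would rewrite everything in the partial‑sum coordinates $\tilde x_i=x_1+\dots+x_i$ (with $\tilde x_j=0$ for $j\le0$): the defining inequalities of $\Pi$ become the difference constraints $\tilde x_{i-1}\le\tilde x_i$ and $\tilde x_i-\tilde x_{i-n}\le b_i$. Along $e$ the tight difference constraints force the relevant differences $\tilde x_i-\tilde x_j$ to stay constant, so $\tilde\varepsilon_i=\varepsilon_1+\dots+\varepsilon_i$ is constant on the connected components of the graph of tight constraints and vanishes on the component of the pinned coordinate $0$; minimality of $e$ makes the support of $\tilde\varepsilon$ a single such component $S$, whence $\varepsilon_i=\mathbbm{1}_S(i)-\mathbbm{1}_S(i-1)\in\{-1,0,1\}$. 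Feeding in the restriction from (b) — which in these coordinates reads $\tilde\varepsilon_i=\tilde\varepsilon_{i-1}$ or $\tilde\varepsilon_i=\tilde\varepsilon_{i-n}$ for $i\neq i_0$ — one shows $S$ must be a single interval $[a,b]$ (or an unbounded tail $[a,\infty)$, which is the case $\varepsilon=\pm e_a$). Since $\exp^*$ sends the $i$th coordinate exponent to $\exp\gamma_i$, telescoping gives
$$\exp^*\varepsilon=\exp\Bigl(\sum_{i\ge1}\varepsilon_i\gamma_i\Bigr)=\exp\Bigl(\sum_{i\ge1}\tilde\varepsilon_i(\gamma_i-\gamma_{i+1})\Bigr)=\exp\bigl(\pm(\gamma_a-\gamma_{b+1})\bigr)$$
(and $\exp(\pm\gamma_a)$ in the unbounded case). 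Writing $a=j_a(n-1)+r_a$ and $b+1=j_{b+1}(n-1)+r_{b+1}$ one has $\gamma_a-\gamma_{b+1}=(\gamma_{r_a}-\gamma_{r_{b+1}})-(j_a-j_{b+1})\delta$, which is a real root of $\hat L$ if $r_a\neq r_{b+1}$, and equals the imaginary root $(j_{b+1}-j_a)\delta$ (with $j_{b+1}\neq j_a$ because $a\neq b+1$) if $r_a=r_{b+1}$; in the unbounded case $\pm\gamma_a=\pm(\gamma_{r_a}-j_a\delta)$ is a real root. So in every case $\exp^*\varepsilon=\exp\alpha$ for a root $\alpha$ of $\hat L$.

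The delicate point, and where most of the effort will go, is the reduction in (c) to $S$ being an interval: one has to rule out direction vectors whose partial sums form several runs glued together only through shifts by $n$. That is exactly where one uses that $\varepsilon$ is the direction of an \emph{edge}, an extreme ray of the tangent cone, rather than of an arbitrary feasible segment; the closing part of (b) has the same flavour, and both come down to the kind of combinatorial bookkeeping already present in the proof of Proposition~\ref{vert}.
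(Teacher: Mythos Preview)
Your treatments of (a) and (b) are sound and close in spirit to the paper's. The genuine gap is in (c): the claim that the support $S$ of $\tilde\varepsilon$ must be a single interval (or a single ray) is \emph{false}, so your announced plan to ``rule out direction vectors whose partial sums form several runs glued together only through shifts by $n$'' will not succeed --- such direction vectors are honest edge directions of $\Pi$. For a concrete witness take $n=3$ with $a_0=a_1=a_2=1$: the vertex $u=(1,1,1,1,0,0,\ldots)\in\Gamma_1\cap\Gamma_2\cap\Gamma_3\cap\Gamma_4\cap\Delta_5\cap\cdots$ has an edge (exceptional index $i_0=1$) to $(0,2,1,0,0,\ldots)$, with direction $\varepsilon=(-1,1,0,-1,0,\ldots)$ and partial sums $\tilde\varepsilon=(-1,0,0,-1,-1,\ldots)$, whose support is $\{1\}\cup[4,\infty)$. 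In your language this $S$ is indeed a single connected component of the tight-constraint graph, but that component is not an interval of integers. The paper's description confirms this in general: the nonzero entries of $\varepsilon$ lie at indices $i_1<\cdots<i_s$ with alternating signs $\pm1$ and $i_3-i_2=i_5-i_4=\cdots=n-1$, with $s$ allowed to be arbitrarily large.

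What actually makes (c) work is not that $S$ is an interval but that the gaps between its runs are rigid. The constraint you already isolated, $\tilde\varepsilon_i\in\{\tilde\varepsilon_{i-1},\tilde\varepsilon_{i-n}\}$ for $i\ne i_0$, together with (a), forces each gap $i_{2j+1}-i_{2j}$ to equal $n-1$, hence $\gamma_{i_{2j+1}}=\gamma_{i_{2j}}-\delta$. Then the finite sum $\sum_i\varepsilon_i\gamma_i=\pm\sum_j(-1)^{j+1}\gamma_{i_j}$ collapses in pairs to $\pm(\gamma_{i_1}-\gamma_{i_s})\mp\tfrac{s-2}{2}\delta$ for even $s$ and to $\pm\gamma_{i_1}\mp\tfrac{s-1}{2}\delta$ for odd $s$, each a root of $\hat L$. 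This is precisely the paper's computation; your partial-sum framework cleanly delivers the alternating $\pm1$ pattern, but from there you should exploit the gap rigidity rather than try to force $S$ to be connected as an interval.
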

\begin{proof}
~

(a) This is immediate from~\ref{vert}(b) and the fact that the endpoints of an edge are vertices.

(b)Let $l<m$ be the two minimal indices such that $e$ is not contained in any of the facets $\Gamma_l,\Delta_l,\Gamma_m,\Delta_m$. Let $u_1$ be the second enpoint of $e$. Clearly, $u_1$ belongs to just one of $\Gamma_m$ and $\Delta_m$, denote that facet $X$. One sees that there exists a point in $\Pi$ contained in all facets containing $u$ as well as facet $X$. Which again contradicts the maximality property.

(c) From (a) and (b) one may deduce the following description of $\varepsilon$. First, all of its coordinates are either 0 or $\pm 1$ and the nonzero coordinates are alternately 1 and $-1$. Second, if $i_1,\ldots,i_s$ are the indices of nonzero coordinates, then $$i_3-i_2=i_5-i_4=\ldots=n-1.$$ Thus, $\exp^*\varepsilon$ is $\exp(\pm(\gamma_{i_1}-\gamma_{i_s}))q^{\pm(s-2)/2}$ in case of even $s$ and $\exp(\pm\gamma_{i_1})q^{\pm(s-1)/2}$\\ in case of odd $s$. Both expressions are exponents of a root.
\end{proof}

For a vertex $u$ we once again consider the vertex cone $C_u$. It is the set of points  of the form $$u+\sum\limits_i \alpha_i\varepsilon_i,$$ where $\{\varepsilon_i\}$ is the set of direction vectors of edges containing $u$. Let $m_u$ stand for the index of the last nonzero coordinate of $u$ and $l_u$ stand for the maximal index such that $u\in\Gamma_{l_u}$. Also, for $m\ge m_u$ let $C_u^m$  be the section of $C_u$ by the space $\{(x_i)|x_i=0\text{ when }i>m\}$, it is a finite dimensional cone with vertex $u$.

The following fact will be useful to us.
\begin{propos}\label{pospow}
All functions $\sigma(C_u^m)$ can be written as the product of $\exp^*u$ and a fraction both the numerator and denominator of which are Laurent polynomials not involving negative powers of $q$. Moreover, the denominator may be assumed to be the product of multiples of the form $1-\exp\alpha$ with $\alpha$ a root of $\hat L$.
\end{propos}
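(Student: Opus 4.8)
The plan is to compute each $\sigma(C_u^m)$ by triangulating the cone into simplicial ones, applying the fundamental parallelepiped formula to each, and tracking powers of $q$. First I would factor out $\exp^* u$. Conditions A and B together with the inequalities $x_i\ge 0$ form a system with an ``interval'' (hence totally unimodular) coefficient matrix and integral right-hand sides, so every vertex of $\Pi$ is integral; thus $\exp^* u$ is an honest monomial in $z_1,\dots,z_{n-1},q$, and, all $u_i$ being $\ge 0$, it involves no negative power of $q$. Since $u\in\mathbb Z^m$, translation gives $\sigma^*(C_u^m)=\exp^*(u)\,\sigma^*(\tilde C)$ with $\tilde C=C_u^m-u$, and $\tilde C$ is pointed (by~\ref{vert}(b): if $d$ and $-d$ both lie in $\tilde C$ then an induction on $i$ forces $d_i=0$). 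Multiplying the eventual fraction by the monomial $\exp^* u$ affects only its numerator and creates no negative powers of $q$, so it suffices to prove the statement for $\tilde C$. Now triangulate $\tilde C$, face-to-face, into simplicial cones using no rays other than its extreme rays. As in the proof of Theorem~\ref{decomp}, applying $\sigma$ to the inclusion--exclusion identity for the indicators of the pieces expresses $\sigma(\tilde C)$ as an alternating sum of the $\sigma$'s of these simplicial cones and of their common faces, each of which is again a simplicial cone spanned by a subset of the extreme rays of $\tilde C$; since an alternating sum of fractions of the required shape (over the common denominator equal to the product of the denominators) is again of that shape, it is enough to treat a single simplicial cone $C'$ with minimal integral generators $v_1,\dots,v_l$, each of which spans an extreme ray of $\tilde C$.

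For $C'$ I will use the key geometric fact, discussed below, that $\exp^* v_j=\exp\alpha_j$ for a root $\alpha_j$ of $\hat L$. Because $C'$ is pointed, its fundamental parallelepiped $\Sigma$ is bounded, so the numerator of $\sigma^*(C')=\bigl(\sum_{y\in\Sigma\cap\mathbb Z^m}\exp^* y\bigr)/\prod_{j=1}^l(1-\exp^* v_j)$ is an honest Laurent polynomial. Let $F$ be the set of $j$ for which $\exp^* v_j=\mu_j q^{-m_j}$ has negative $q$-degree ($\mu_j$ a monomial in the $z_i$ and $m_j>0$); for these $j$ I rewrite $\tfrac{1}{1-\exp^* v_j}=\tfrac{-\exp^*(-v_j)}{1-\exp^*(-v_j)}$ and move the monomial $-\exp^*(-v_j)$, which has $q$-degree $m_j>0$, into the numerator. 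The resulting denominator is a product of factors $1-\exp\alpha$ with $\alpha$ ranging over $\{\alpha_j:j\notin F\}\cup\{-\alpha_j:j\in F\}$, all of them roots whose exponents have nonnegative $q$-degree, hence a Laurent polynomial in the $z_i$ free of negative powers of $q$. In the new numerator, a lattice point $y=\sum_j\lambda_j v_j$ of $\Sigma$ (with $\lambda_j\in[0,1)$) gives $\exp^* y$ of $q$-degree $\sum_j\lambda_j\cdot(q\text{-degree of }\exp^* v_j)$, which is $\ge-\sum_{j\in F}m_j$ (the summands with $j\notin F$ are $\ge 0$, and $\lambda_j\le 1$); multiplied by $\prod_{j\in F}(-\exp^*(-v_j))$, of $q$-degree exactly $\sum_{j\in F}m_j$, every resulting monomial has $q$-degree $\ge 0$. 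Thus $\sigma^*(C')$, and hence $\sigma^*(C_u^m)$, has the required form.

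The step I expect to be the main obstacle is the geometric input just used: that every extreme ray of $\tilde C=C_u^m-u$ is spanned by a vector $\varepsilon$ with $\exp^*\varepsilon=\exp\alpha$ for some root $\alpha$ of $\hat L$. For the rays coming from edges of $\Pi$ at $u$ this is Proposition~\ref{edge}(c). The delicate point is that intersecting $C_u$ with $\{x_i=0:i>m\}$ can produce extreme rays which are not edge directions of $\Pi$. To handle these I would use that only finitely many $\Gamma_i$ are active at $u$ (indeed $\psi_i(u)=0<b_i$ as soon as $i>m_u+n-1$), so $\tilde C$ is cut out by finitely many of the inequalities $\psi_i(x)\le 0$ together with $x_i\ge 0$ ($i\le m$) and $x_i=0$ ($i>m$); running the arguments behind Propositions~\ref{vert} and~\ref{edge} on this finite system should show its extreme rays are again of the type described in~\ref{edge}(c), possibly with the alternating pattern cut off at index $m$ (the shortest case being a coordinate vector $e_i$, for which $\exp^* e_i=\exp\gamma_i$). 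Granting this, the previous paragraphs give $\sigma^*(C_u^m)=\exp^*(u)\cdot N/D$ with $D$ a product of factors $1-\exp\alpha$ ($\alpha$ a root of $\hat L$) and $N,D$ free of negative powers of $q$, which is the assertion.
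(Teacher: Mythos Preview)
Your argument is correct and essentially identical to the paper's: triangulate $C_u^m$ into simplicial cones whose generators are edge directions, write each piece via the fundamental-parallelepiped formula, and for every generator $\varepsilon$ with negative $q$-degree replace $(1-\exp^*\varepsilon)^{-1}$ by $-\exp^*(-\varepsilon)\,(1-\exp^*(-\varepsilon))^{-1}$, invoking Proposition~\ref{edge}(c) to keep every denominator factor of the form $1-\exp\alpha$. The point you single out as the main obstacle---that the extreme rays of $C_u^m$ are already edge directions of $\Pi$ at $u$---is simply asserted in the paper (``the edges of $C_u^m$ are all edges of $C_u$ as well''), so your treatment is, if anything, more careful than the original.
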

\begin{proof}
Consider a triangulation of $C_u^m$. The corresponding inclusion-exclusion formula consists of summands of the form $$\pm\dfrac{\exp x}{\prod\limits_{\varepsilon\in E}(1-\exp\varepsilon)},$$ where $E$ is a linearly independent set of direction vectors of edges containing $u$, while $x$ is an integer point in the fundamental parallelepiped, i.e. expressible as $u+\sum_{\varepsilon\in E}\alpha_\varepsilon\varepsilon$ with $\alpha_\varepsilon\in\lbrack 0,1)$.

If $F\subset E$ is the subset of such $\varepsilon$ that $\exp^*\varepsilon$ involves $q$ in a negative power, then the power of $q$ in $\exp^*x$ is no fewer than that in
\begin{equation}\label{quot}
\prod\limits_{\varepsilon\in F}\exp^*\varepsilon. 
\end{equation}
Consequently, multiplication of the numerator and denominator by~(\ref{quot}) provides a summand of the needed form. (We employ~\ref{edge}(c) and the fact that the edges of $C_u^m$ are all edges of $C_u$ as well.)
\end{proof}

Within the ring $\mathbb{C}(z_1^{\pm 1},\ldots,z_{n-1}^{\pm 1},q^{\pm 1})$ consider the subring $T$ of functions expressible as the quotient of two Laurent polynomials, with the denominator being the product of multiples of the form $1-\exp\alpha$ with $\alpha$ a root of $\hat L$.  All the $\sigma^*(C_u^i)$  are elements of $T$. Now, let $R$ be the ring of characters (Laurent series) the support of which is contained in the union of a finite number of lower sets of the poset of weights (cf.~\cite{carter}). The ring $T$ is embedded in $R$, since any $1-\exp\alpha$ is an invertible element of $R$. Let $\tau_u^i$ denote the character which is the image of $\sigma^*(C_u^i)$ under this embedding.

For any $i\ge l_u$ one has $$\sigma^*(C_u^{i+1})=\dfrac{\sigma^*(C_u^i)}{1-\exp\gamma_{i+1}},$$ hence the equality $\tau_u^{i+1}(1-\exp\gamma_{i+1})=\tau_u^i$ in $R$. Now, when $i\rightarrow\infty$  the power of variable $q$ in $\exp\gamma_{i+1}$ approaches infinity as well, hence the coefficients of the series $\tau_u^i$ stabilize. Thus, a limit series $\tau_u$ is defined. 

We are finally ready to state the sought-for analog of Brion's theorem for polytope $\Pi$.

\begin{theorem}\label{infbrion}
In $R$ the identity
\begin{equation}\label{brion}
S^*(\Pi)=\sum\limits_{u\text{ vertex of }\Pi} \tau_u
\end{equation}
holds.
\end{theorem}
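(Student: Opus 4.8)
The plan is to approximate $\Pi$ by the finite-dimensional polytopes obtained by truncating coordinates, apply Brion's theorem to each, and pass to a limit. For $m\ge 1$ set $\Pi^m=\Pi\cap E_m$, where $E_m=\{(x_i)\mid x_i=0\text{ for }i>m\}$. This is a bounded rational polyhedron in $\mathbb R^m$, cut out by the inequalities $x_i\ge 0$ and $\psi_i(x)\le b_i$ for $1\le i\le m$ — the remaining conditions of type B are redundant, since for $i>m$ and $x\in E_m$ one has $\psi_i(x)\le\psi_m(x)\le b_m\le b_i$. Processing the coordinates in order $i=1,\dots,m$, one sees that a point of $\Pi^m$ lying on one of $\Gamma_i,\Delta_i$ for every $i\le m$ is pinned down, with integer coordinates, by those hyperplanes; these are precisely the vertices of $\Pi^m$, and all of them are lattice points. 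This integrality will be essential in the limiting step.

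First I would match the combinatorial data of $\Pi^m$ with that of $\Pi$. The argument of Proposition~\ref{vert}(b) involves only the facet hyperplanes and so applies verbatim inside $E_m$; combined with the fact that a point supported in $[1,m]$ automatically lies on every $\Delta_i$ with $i>m$, it shows that, under the evident identification, the vertices of $\Pi^m$ are exactly the vertices $u$ of $\Pi$ with $m_u\le m$. For such a $u$ the vertex cone of $\Pi^m$ at $u$ is exactly $C_u^m$: the tangent cone of an intersection of polyhedra at a common point is the intersection of the tangent cones, and here $\Pi^m=\Pi\cap E_m$ with $E_m$ linear, so this tangent cone is $C_u\cap E_m=C_u^m$ (here $C_u$, the cone generated by the edge directions at $u$, is, as in the finite-dimensional case, also the local cone $\{u+\alpha(y-u):y\in\Pi,\ \alpha\ge0\}$). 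Applying Brion's theorem to $\Pi^m$, then the substitution $\exp^*$ (legitimate on each summand by Proposition~\ref{pospow}), and finally the embedding $T\hookrightarrow R$, yields in $R$ the identity
$$S^*(\Pi^m)=\sum_{u:\ m_u\le m}\tau_u^m.$$

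It then remains to let $m\to\infty$. On the left, $S^*(\Pi^m)\to S^*(\Pi)$ coefficientwise, because an integer point of $\Pi$ whose $\exp^*$-monomial has $q$-degree $b$ is supported in $[1,(b+1)(n-1)]$ and so already lies in $\Pi^m$ once $m\ge(b+1)(n-1)$. On the right, for a fixed monomial $\mu=z_1^{a_1}\cdots z_{n-1}^{a_{n-1}}q^{b}$ one must control which terms $\tau_u^m$ affect its coefficient. Proposition~\ref{pospow} is decisive here: $\tau_u^m$ equals $\exp^*u$ times a ratio of Laurent polynomials not involving negative powers of $q$, so every monomial of $\tau_u^m$ has $q$-degree at least that of $\exp^*u$, which — using that $u$ is a lattice point — is at least $\lfloor(m_u-1)/(n-1)\rfloor$. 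Hence only the finitely many vertices $u$ with $m_u\le(b+1)(n-1)$ (they are vertices of $\Pi^{(b+1)(n-1)}$) contribute to the coefficient of $\mu$, and for those the coefficients of $\tau_u^m$ stabilize, as $m\to\infty$, to those of $\tau_u$ by the construction of $\tau_u$. Reading off coefficients then gives $S^*(\Pi)=\sum_{u\text{ vertex of }\Pi}\tau_u$ in $R$, the sum on the right being coefficientwise finite and thus well defined.

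The description of $\Pi^m$ and the matching of vertices and vertex cones are routine, resting on Propositions~\ref{vert} and~\ref{edge}. The step I expect to be the main obstacle is the passage to the limit: one has to make sense of the infinite sum $\sum_u\tau_u$ and simultaneously identify it with the coefficientwise limit of the finite sums $\sum_{m_u\le m}\tau_u^m$, and this rests squarely on the $q$-positivity provided by Proposition~\ref{pospow} together with the integrality of $\Pi$.
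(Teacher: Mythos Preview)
Your proof is correct and follows essentially the same approach as the paper: truncate to the finite-dimensional sections $\Pi_m=\Pi\cap E_m$, apply the classical Brion theorem there to obtain $S^*(\Pi_m)=\sum_{m_u\le m}\tau_u^m$, and then pass to the limit coefficientwise using the $q$-degree lower bound from Proposition~\ref{pospow}. The paper's proof is terser but structurally identical; you have simply filled in the details the paper leaves implicit (the identification of vertices and vertex cones of $\Pi_m$, and the explicit $q$-degree bookkeeping in the limiting step).
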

\begin{proof}
Let $\Pi_m$ denote the section of $\Pi$ by the space $\{(x_i)|x_i=0\text{ when }i>m\}$. The vertices of $\Pi_m$ are precisely the vertices of $\Pi$ with $m_u\le m$. Thus, Brion's theorem reads as
$$
S^*(\Pi_m)=\sum\limits_{\substack{u\text{ vertex of }\Pi\\m_u\le m}}\sigma^*(C_u^m).
$$
As an immediate consequence we obtain the equality in $R$:
\begin{equation}\label{brionm}
S^*(\Pi_m)=\sum\limits_{u\text{ vertex of }\Pi}\tau_u^m.
\end{equation}
(We set $\tau_u^m=0$ whenever $m<m_u$.)

Obviously, the coefficients of the left-hand side stabilize onto  the coefficients of $S^*(\Pi)$ when $m$ approaches infinity. Also, by definition, the coeffitients of $\tau_u^m$ stabilize onto the coeffients of $\tau_u$. However, proposition~\ref{pospow} implies that all the monomials in $\tau_u^m$ and $\tau_u$ contain $q$ in a power of at least $\tfrac{m_u-1}{n-1}$. Hence, for any monomial its coefficients in both sides of~(\ref{brionm}) are the same for sufficiently large $m$. That is because the set of $u$ for which the monomial's coefficient is nonzero in $\tau_u$ or some $\tau_m$ is only finite. Herefrom we obtain~(\ref{brion}) coefficient-wise.
\end{proof}

In order to compute $\charac S$ it will now suffice to find the characters $\tau_u$.

\section{Computation of the contributions $\tau_u$}\label{calc}

Let a {\it simple} vertex of $\Pi$ be one for which the direction vectors of all the edges of the corresponding vertex cone are linearly independent (i.e. the vertex cone is simplicial). Clearly, a vertex is simple iff it is contained in exactly one of the facets $\Gamma_i$ and $\Delta_i$ for each $i$. Theorem~\ref{infbrion} enables us to express $S^*(\Pi)$ explicitly due to the following key fact.

\begin{theorem}\label{nonsimp}
For a non-simple vertex $u$ one has $\tau_u=0$.
\end{theorem}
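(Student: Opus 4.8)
The plan is to exploit Theorem~\ref{hyperp} to reduce the non-simple vertex cone to a sum of simpler contributions, each of which turns out to vanish. Fix a non-simple vertex $u$; by Proposition~\ref{vert}(b) it lies in at least one facet for every index $i$, and by the characterization of simple vertices there is some index where it lies in both $\Gamma_i$ and $\Delta_i$. The first step is to pick the \emph{largest} such index, call it $j$; by Proposition~\ref{vert}(a) this forces $u\in\Gamma_{j-1}\cap\Delta_{j-n}$ as well, so $j>n$ and $u_j=u_{j-n}=0$. Working inside the finite-dimensional section $C_u^m$ for $m$ large, I would apply Theorem~\ref{hyperp} with the cone $C=C_u^m$ and a suitable supporting hyperplane $\beta$ of the bounding half-space. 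The natural choice is the hyperplane $\{x_j = u_j\} = \{x_j = 0\}$ — which passes through $u$ and, since $u$ maximizes $j$, contains no interior point of $C_u^m$ (every edge direction at $u$ with a nonzero $x_j$-component points strictly into the half-space $x_j\ge 0$, because the facet $\Delta_j$ is one of the facets through $u$). Then $\sigma(C_u^m) = \sum_{\varepsilon\not\subset\beta}\sigma(C_\varepsilon^m\cap D)$, a sum over edge directions $\varepsilon$ at $u$ whose $x_j$-coordinate is nonzero (hence, by Proposition~\ref{edge}(c) and the structure of $\varepsilon$, equal to $+1$, since we sit on the boundary facet $x_j=0$).

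The second step is to show that each summand $\sigma(C_\varepsilon^m\cap D)$ vanishes, which by the degenerate-cone criterion amounts to exhibiting an affine line inside $C_\varepsilon^m\cap D$. Here I would use the other facet through $u$ at index $j$, namely $\Gamma_j$, together with the pair $\Gamma_{j-1}$, $\Delta_{j-n}$ supplied by Proposition~\ref{vert}(a). The idea is that the edge $e$ with direction $\varepsilon$ lies in $\Gamma_j$ (one checks, via Proposition~\ref{edge}(a)–(b), that $\Gamma_j$ must be among the facets containing $e$ — it cannot be the single exceptional facet of $e$ because that exceptional index, if it exists, can be chosen away from $j$), so the line through $u$ in direction $\varepsilon$ lies in $\Gamma_j$. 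But the description of edge directions in Proposition~\ref{edge}(c) shows that a direction $\varepsilon'$ supported only on coordinates $i_1 < \dots < i_s$ with the spacing condition, when reflected appropriately, produces a direction that $C_\varepsilon$ contains going both forwards and backwards. Concretely, since $C_\varepsilon$ (the cone at the positive-dimensional face $e$) already contains the line spanned by $\varepsilon$ through the relative interior of $e$, the cone $C_\varepsilon^m$ is degenerate, and intersecting with the half-space $D$ — whose bounding hyperplane contains $e$ itself — keeps it degenerate. Thus $\sigma(C_\varepsilon^m\cap D)=0$ for every $\varepsilon$ appearing in the sum, giving $\sigma(C_u^m)=0$, and passing to the limit $m\to\infty$ as in the proof of Theorem~\ref{infbrion} yields $\tau_u=0$.

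The main obstacle I anticipate is the bookkeeping in the second step: verifying carefully that the edge direction $\varepsilon$ (with $x_j$-component $+1$) is genuinely contained in the facet $\Gamma_j$ and not thrown out as the single exceptional facet of Proposition~\ref{edge}(b), and then confirming that the resulting degeneracy survives intersection with $D$. This requires chasing the combinatorics of which facets contain a given edge — using Proposition~\ref{vert}(a), Proposition~\ref{edge}(a), and the explicit shape of $\varepsilon$ from Proposition~\ref{edge}(c) — and checking that the exceptional index can always be taken different from $j$ because $u$ (and hence $e$'s relevant endpoint) lies in both $\Gamma_j$ and $\Delta_j$. A secondary subtlety is making sure the supporting hyperplane $\beta=\{x_j=0\}$ really contains no interior point of $C_u^m$: this is where maximality of $j$ is essential, since a larger index $j'$ with $u\in\Gamma_{j'}\cap\Delta_{j'}$ would have been available had some edge crossed the hyperplane, contradicting the choice of $j$. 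Once these points are nailed down, the rest is the now-familiar limiting argument.
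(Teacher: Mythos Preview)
There is a genuine gap in the second step, and it is fatal. You argue that each cone $C_\varepsilon^m\cap D$ is degenerate because $C_\varepsilon^m$ contains the line spanned by $\varepsilon$ and ``the bounding hyperplane of $D$ contains $e$ itself.'' But you chose $\beta=\{x_j=0\}=\Delta_j$ and then selected precisely those edges $\varepsilon$ whose $x_j$-coordinate is \emph{nonzero} --- i.e.\ edges \emph{not} contained in $\beta$. So the line in direction $\varepsilon$ is transverse to $\beta$, and intersecting with the half-space $D$ cuts that line down to a ray: the degeneracy of $C_\varepsilon^m$ does not survive. Your separate observation that $e\subset\Gamma_j$ is irrelevant here, since $\Gamma_j$ is not the bounding hyperplane of $D$. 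Indeed, if your argument were valid it would prove $\sigma(C)=0$ for every non-degenerate rational cone admitting a supporting hyperplane (just apply Theorem~\ref{hyperp} and declare every summand zero), which is absurd.

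The paper's actual proof is considerably more delicate and shares almost nothing with your outline beyond the invocation of Theorem~\ref{hyperp}. It cuts by the hyperplane $\{x_1=\const\}$ (that is, $\Gamma_1$ or $\Delta_1$) rather than by a hyperplane at the largest double index, and runs an induction on $l_u$: each edge-cone $C_j$ intersected with $\beta$ is identified, via the shift operator $\xi$, with the vertex cone of some vertex $u'$ with $l_{u'}=l_u-1$, so that non-simplicial $C_j$ vanish by the induction hypothesis. The heart of the argument is then a second induction showing that the surviving \emph{simplicial} edge-cones, when they occur, come in exactly two whose contributions cancel; this requires a detailed combinatorial description of the set $I=\{i:u\in\Gamma_i\cap\Delta_i\}$ and a case analysis. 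None of this cancellation mechanism is present in your proposal.
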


\begin{proof}
The following notations will be convenient. For a set $X$ of facets of $\Pi$ let $X'$ be the set obtained from $X$ by replacing each $\Gamma_i$ with $\Gamma_{i-1}$ and each $\Delta_i$ with $\Delta_{i-1}$. (With $\Gamma_1,\Delta_1\in X$ simply being deleted.) Further, let $\xi$ denote the "shift" transformation substituting $z_1$ for $z_2$, $z_2$ for $z_3$, $\dots$, $z_{n-1}$ for $z_1q$. This substitution may viewed as $\xi\in W$ mentioned above acting on characters.

It will suffice to show that $\sigma^*(C_u^{l_u})=0$. This will be done by induction on $l_u$. 

Apply theorem~\ref{hyperp} to the cone $C_u^{l_u}$ and hyperplane $\beta$ of the form $x_1=\const$ containing $u$. This hyperplane is, obviously, either $\Gamma_1$ or $\Delta_1$ and contains a facet of the cone. Let $\{\varepsilon_j\}$ be the set of direction vectors of the edges of $C_u^{l_u}$ not belonging to $\beta$ (i.e. having a nonzero first coordinate). Each $\varepsilon_j$ corresponds to one of the cones $C_j$ considered in the statement of theorem~\ref{hyperp}, which now reads as $$\sigma^*(C_u^{l_u})=\sum\limits_j\sigma^*(C_j).$$

We will now show that the induction hypothesis implies that $\sigma^*(C_j)=0$ whenever $C_j$ is non-simplicial. Let $D_j=C_j\cap\beta$. There is only one edge of $C_j$ not contained in $\beta$ (the one with direction $\varepsilon_j$), hence it would be enough to show that $\sigma^*(D_j)=0$ to verify the last claim.

In order to do that, consider the set $X$ of facets containing the edge, given by $\varepsilon_j$. It is clear from proposition~\ref{edge}(a) together with~\ref{vert}(c) that $X'$ is precisely the set of facets of cone $C_{u'}^{l_{u'}}$ for some vertex $u'$ with $l_{u'}=l_u-1$. One then observes that $$\exp^*(-u)\sigma^*(D_j)=\xi\left(\exp^*(-u')\sigma^*(C_{u'}^{l_{u'}})\right).$$ But the cones $D_j$ and, consequently, $C_{u'}^{l_{u'}}$ are non-simplicial by hypothesis, hence $\sigma^*(C_{u'}^{l_{u'}})=0$.

We are left to show that the sum of $\sigma^*(C_j)$ over the simplicial $C_j$ is zero. In particular, the base of our induction will follow. For such a $C_j$, in the above notations, the set $X'$ contains exactly one of the facets $\Gamma_i$ and $\Delta_i$ for each $1\le i\le l_u-1$. It is straightforward to deduce from propositions~\ref{vert} and~\ref{edge} that this possible only if the set $I$ of $i$ for which $u\in\Gamma_i\cap\Delta_i$ satisfies the following description. It is of the form $$\lbrack l_1,r_1\rbrack\cup\ldots\cup\lbrack l_s,r_s\rbrack$$ with $l_1=n+1$ and $l_{c+1}-r_c=n$ for all $c<s$. Furthermore, the set $X$ then comprises the facets $\Gamma_{r_c}$ for all $c<s$, one (arbitrary) of $\Gamma_{r_s}$ and $\Delta_{r_s}$ and $\Delta_i$ for all other $i\in I$.

Thus, in case simplicial $C_j$ do occur, there is exactly two of them: $C_{j_1}$ и $C_{j_2}$, we are to show that $\sigma^*(C_{j_1})+\sigma^*(C_{j_2})=0$. This will, again, be proved by induction on $l_u$. Let $U$ denote the set of all facets containing $u$ and consider the following possibilities.
\begin{enumerate}
\item The value $r_1-n+1$ (the index of the first nonzero coordinate of $u$) is greater than 2. In this case we empoy the induction hypothesis for vertex $u'$ the facets containing which are precisely elements of $U'\backslash\{\Delta_n\}$. For $u'$ one has the two corresponding simplicial cones $C'_1$ and $C'_2$. A calculation indicates that $$\dfrac{\sigma^*(C_{j_1})}{\xi(\sigma^*(C'_1))}=\dfrac{\sigma^*(C_{j_2})}{\xi(\sigma^*(C'_2))},$$ which due to induction hypothesis implies the needed equality.
\item $r_1=n+1$ and $s>1$. We, in similar fashion, make use of the hypothesis for vertex $u'$ the set of facets containing which is $U^{(n)}\backslash\{\Gamma_1\}$ (here $^{(n)}$ stands for the $n$-fold application of  $'$.)
\item $r_1=n+1$ and $s=1$. This case is the base of our induction, the equality can be verified via direct computation.
\end{enumerate}

\end{proof}

We now turn to the description of simple vertices. A simple vertex $u$ is contained in just one of $\Gamma_i$ and $\Delta_i$ for each $i$. Hence for each $i$ the cone $C_u$ has exactly one such edge that the first nonzero coordinate of its direction vector $\varepsilon_i$ has index $i$. The character $\tau_u$ may be expressed as
\begin{equation}\label{simp}
\tau_u=\dfrac{\exp^*u}{(1-\exp^*\varepsilon_1)(1-\exp^*\varepsilon_2)\ldots}.
\end{equation}

Proposition~\ref{vert} shows that a simple vertex contained in $\Delta_i$  (and thus not contained in $\Gamma_i$) is necessarily contained in $\Delta_{i+n-1}$ and not in $\Gamma_{i+n-1}$. This sets up a one-to-one corresponding between the simple vertices and the good sequences of 0's and 1's. Let $u_y$ be the vertex corresponding to good sequence $y$. Theorem~\ref{char} is then a direct consequence of
\begin{propos}
For each good sequence $y$ the identity $\tau_{u_y}=\exp(w_y\lambda-\lambda)F_y$ holds.
\end{propos}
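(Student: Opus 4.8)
The plan is to prove the identity $\tau_{u_y}=\exp(w_y\lambda-\lambda)F_y$ by induction on the good sequence $y$, matching the recursive structure of the definition of $F_y$ in~(\ref{rec}) against a recursion for the $\tau_{u_y}$ coming from~(\ref{simp}). The base case is $y=(0,0,0,\ldots)$: here $w_y=\id$, the corresponding simple vertex is the origin $u_y=0$ (it lies on every $\Delta_i$ and no $\Gamma_i$), each edge direction $\varepsilon_i$ is the minimal vector realizing $\exp^*\varepsilon_i=\exp\gamma_i$, and~(\ref{simp}) gives $\tau_0=1/\prod_{i\ge1}(1-\exp\gamma_i)=F_{(0,0,\ldots)}$, as required since $w_y\lambda-\lambda=0$.

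For the inductive step, write $y=(y_1,y_2,\ldots)$ and $\tilde y=(y_2,y_3,\ldots)$. The first task is to relate the simple vertex $u_y$ (and its edge data) to $u_{\tilde y}$ via the shift operator $\xi$, paralleling the device already used in the proof of Theorem~\ref{nonsimp}: deleting the first coordinate and relabeling via the $'$ operation should send the facet set of $C_{u_y}$ to that of $C_{u_{\tilde y}}$, so that $\exp^*(-u_y)\,\tau_{u_y}$ is obtained from $\xi$ applied to $\exp^*(-u_{\tilde y})\,\tau_{u_{\tilde y}}$, possibly up to the single factor $1-\exp^*\varepsilon_1$ coming from the one edge of $C_{u_y}$ whose direction vector has a nonzero first coordinate (index $1$). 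Concretely I expect
$$
\tau_{u_y}=\frac{\exp^*(u_y)}{\exp^*(\xi u_{\tilde y})}\cdot\frac{\xi(\tau_{u_{\tilde y}})}{1-\exp^*\varepsilon_1},
$$
where $\varepsilon_1$ is the edge direction at $u_y$ whose leading coordinate has index $1$, and one must check that $\xi$ applied to the denominator $\prod_{i\ge1}(1-\exp^*\varepsilon_i^{\tilde y})$ of $\tau_{u_{\tilde y}}$ reproduces $\prod_{i\ge2}(1-\exp^*\varepsilon_i^{y})$; this is exactly the statement that shifting indices down by one and relabeling roots via $\xi$ matches the edge structure, which follows from Propositions~\ref{vert} and~\ref{edge}.

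The second task is to identify the scalar prefactor and the remaining linear form with the ingredients of~(\ref{rec}). By induction $\tau_{u_{\tilde y}}=\exp(w_{\tilde y}\lambda-\lambda)F_{\tilde y}$, so $\xi(\tau_{u_{\tilde y}})=\exp(\xi w_{\tilde y}\lambda-\xi\lambda)\,\xi F_{\tilde y}$. Comparing with the target $\exp(w_y\lambda-\lambda)F_y$ and with~(\ref{rec}), I need two things: first, $\exp^*\varepsilon_1=\exp(w_y\gamma_1)$, i.e. the leading edge direction of $C_{u_y}$ is the root $w_y\gamma_1$ — this should follow by writing $w_y=s_{\gamma_1}^{y_1}$ times (a $\xi$-conjugate of) $w_{\tilde y}$ and tracking which facet ($\Gamma_1$ if $y_1=1$, $\Delta_1$ if $y_1=0$) the vertex lies on, since that determines whether the edge direction is the "reflected" root; and second, that the exponential prefactor $\exp^*(u_y)/\exp^*(\xi u_{\tilde y})$ together with the discrepancy between $\xi(w_{\tilde y}\lambda-\lambda)$ and $w_y\lambda-\lambda$ cancels out. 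This last bookkeeping is a direct computation using the explicit coordinates of $u_y$: the nonzero coordinates of a simple vertex are forced by conditions A and B to be the successive differences $b_i-b_{i-n+1}$ (or $b_i$ for small $i$) at the indices where $y$ jumps, and $\exp^*$ of that sequence must be shown to equal $\exp(w_y\lambda-\lambda)$ up to the root-denominator contributions already accounted for.

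The main obstacle I anticipate is the careful combinatorial bookkeeping in the inductive step: precisely matching the index-shift-and-relabel operation on facets with the action of $\xi$ on roots and weights, and in particular verifying the case split $y_1=0$ versus $y_1=1$ correctly produces the factor $1-\exp(w_y\gamma_1)$ in the denominator with the right $w_y$ (as opposed to $w_{\tilde y}$ or a $\xi$-conjugate). The reflection $s_{\gamma_1}$ in the definition $w_y=\ldots s_{\gamma_1}^{y_1}$ is what makes $y_1=1$ nontrivial, and one has to confirm that lying on the facet $\Gamma_1$ (rather than $\Delta_1$) is exactly the geometric shadow of inserting that reflection. Everything else — the base case, the telescoping of root factors under $\xi$, the coordinate formula for $u_y$ — is routine once this correspondence is pinned down.
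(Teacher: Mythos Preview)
Your proposal is correct and follows essentially the same route as the paper. The paper's proof is extremely terse: it simply observes that the identity splits into the two equalities $\exp^*u_y=\exp(w_y\lambda-\lambda)$ and $\exp^*\varepsilon_1=\exp(w_y\gamma_1)$ (the latter being exactly what makes $\exp^*(-u_y)\tau_{u_y}$ satisfy the recursion~(\ref{rec}) via~(\ref{simp})), and asserts that both follow by induction on $m_{u_y}$. Your plan arrives at the same two key equalities, only you bundle them into a single induction on $y\mapsto\tilde y=(y_2,y_3,\ldots)$ rather than stating and proving each separately; your ``exponential prefactor cancellation'' is precisely the first equality in disguise, and you already flag this at the end of your second task.
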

\begin{proof}
It will suffice two verify to things. First, that $\exp^*u_y=\exp(w_y\lambda-\lambda)$ and, second, that the expressions $\exp^*(-u_y)\tau_{u_y}$ satisfy the same recurrence relation~(\ref{rec}). Formula~(\ref{simp}) indicates that the latter would follow from $\exp^*\varepsilon_1=\exp(w_y\gamma_1)$.

Both of these equalities are straightforward to verify via an induction on $m_{u_y}$, the index of the last 1 in sequence $y$.
\end{proof}

We have now completed the consideration of a weight $\lambda$ for which all the $a_i$ are nonzero. As planned, we now proceed to explain how the remaining case can be reduced to this situation.

\section{Weights with a zero coordinate}

Within this section it will be convenient to set $a_n=a_0$.

The main difference is that if $a_i=0$ then a vertex of $\Pi$ may be contained in both $\Gamma_i$ and $\Delta_i$ even though $i\le n$. The definitions of the sets $\Gamma_i$ and $\Delta_i$ are formally the same (the intersections of $\Pi$ and a hyperplane), however, they now need not have codimension 1 or be pairwise distinct. Nevertheless, the content of part~\ref{poly} is easily adjusted to this case. Most importantly, we can define the $\tau_u$ in complete analogy with the above and then prove theorem~\ref{infbrion}.

However, the simple description of the $\tau_u$ given in part~\ref{calc} is not valid anymore. There arise non-simple vertices with nonzero contributions. In order to reduce this case to the previous one we introduce the following formalism.

First consider an arbitrary weight $\lambda_1$ without zero coordinates, i.e. situated in the previous case. Proposition~\ref{vert} shows that for any vertex of the corresponding polytope $\Pi_1$ the set of facets containing it has the following three properties.
\begin{enumerate}
\item For each $i>0$ the set $\omega$ contains at least one of the facets $\Gamma_i$ and $\Delta_i$.
\item  $\omega$ contains but a finite number of facets of type $\Gamma$.
\item For each $i>0$ both $\Gamma_i$ and $\Delta_i$ are in $\omega$ iff both $\Gamma_{i-1}$ and $\Delta_{i-n}$ are also in $\omega$.
\end{enumerate}
(Naturally, we're referring to the facets of $\Pi_1$.) Moreover, any set $\omega$ satisfying these conditions is precisely the set of facets containing a certain vertex of $\Pi_1$.

Let $\Omega$ stand for the set of all sets of symbols $\Gamma_i$ and $\Delta_i$ satisfying these three conditions. $\Omega$ is in bijection with the set of vertices of $\Pi_1$. Now let $u_1$ be a vertex of $\Pi_1$ and $\tau_{u_1}$ be the character defined above. If $\omega\in\Omega$ is the corresponding set, then the character $$\tau_\omega=\exp^*(-u_1)\tau_{u_1}$$ is well defined.

For our case of a singular weight $\lambda$ a fourth condition is needed.
\begin{enumerate}
\setcounter{enumi}{3}
\item If $a_i=0$, then $\omega$ contains both $\Gamma_i$ and $\Delta_i$ iff it contains $\Gamma_{i-1}$ or $i=1$.
\end{enumerate}
A slight adjustment of the proof of~\ref{vert} shows that the set of sets of symbols satisfying conditions (1)-(4) is in bijection with the set of vertices of $\Pi$. This set of sets will be denoted $\Omega_{A_\lambda}$ where $A_\lambda$ is the set of indices $i$ for which $a_i=0$. For $\omega\in\Omega_{A_\lambda}$ we can once again define $$\tau_\omega=\exp^*(-u)\tau_u,$$ $u$ being the corresponding vertex.

We can likewise define the set $\Omega_A$ for any $A\subset\lbrack 1,n\rbrack$ (in particular, we have $\Omega=\Omega_\varnothing$) and the character $\tau_\omega$ for any $\omega\in\Omega_A$. This is possible due to the fact that $\Omega_A$ depends only on $A$ and not a specific weight, as well as the fact that the vertex cones of different polytopes corresponding to the same $\omega$ are identified by a translation. Given two sets $A\subset B\subset\lbrack 1,n \rbrack$ for each $\omega_A\in\Omega_A$ there is a unique $\omega_B\in\Omega_B$ such that $\omega_A\subset\omega_B$. This is clear from the fact that there is unique way of supplementing some  $\omega_A\in\Omega_A$ in a way for it to comply with the new strengthened condition (4) for set $B$. Moreover, it is straightforward to verify that any $\omega_B$ can be acquired from some $\omega_A$ by such a supplementation. As a result we obtain a surjective map $$\varphi_{A,B}:\Omega_A\rightarrow\Omega_B.$$

With the version of theorem~\ref{infbrion} for $\lambda$ taken into account, theorem~\ref{char} for $\lambda$ reduces to
\begin{theorem}\label{sing}
Let $A=\{i_1,\ldots,i_s\}$. For each $\omega\in\Omega_{A_\lambda}$ we have $$\tau_\omega=\sum\limits_{\omega'\in\varphi_{\varnothing,A_\lambda}^{-1}(\omega)}\tau_{\omega'}.$$
\end{theorem}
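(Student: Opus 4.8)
The plan is to reduce the statement to a purely combinatorial/geometric identity about the vertex cones and then to an inclusion–exclusion over the fibre $\varphi_{\varnothing,A_\lambda}^{-1}(\omega)$. First I would fix $\omega\in\Omega_{A_\lambda}$ with corresponding vertex $u$ of $\Pi$, and observe that the vertex cone $C_u$ of $\Pi$ is cut out by the hyperplanes indexed by the facets in $\omega$. The key point is that for $i\in A_\lambda$ the two hyperplanes $\Gamma_i$ and $\Delta_i$ need no longer be transverse to the rest of the cone in the way they are when $a_i\neq 0$; rather, in a neighbourhood of $u$ the facet $\Gamma_i$ (which for $a_i=0$ is the hyperplane $\psi_i(x)=b_i$ with $b_i=b_{i-1}$) degenerates so that the local geometry of $\Pi$ at $u$ coincides with the local geometry at $u_1$ of a polytope $\Pi_1$ attached to a \emph{generic} weight $\lambda_1$, after forgetting (``un-imposing'') the coincidence $b_i=b_{i-1}$. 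Concretely, I would show that $C_u$ equals the union, or more precisely the support of the inclusion–exclusion sum, of the vertex cones $C_{u'}$ over the $\omega'\in\varphi_{\varnothing,A_\lambda}^{-1}(\omega)$, all sharing the vertex $u$ after the translating identification mentioned in the paper.

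The engine for this is Theorem~\ref{decomp} (or its specialisation Theorem~\ref{hyperp}), applied iteratively, one index $i\in A_\lambda$ at a time. Suppose $i\in A_\lambda$ and consider the hyperplane $\beta_i=\{\psi_i(x)=b_i\}=\Gamma_i$. Since $a_i=0$ forces $b_i=b_{i-1}$, this hyperplane passes through $u$ and through the relevant faces, and I would set up $C$ to be the cone of the ``partially generic'' polytope in which the equality at index $i$ has been released while the ones at the other elements of $A_\lambda$ are still imposed, and let $D$ be the half-space $\psi_i(x)\le b_i$. Theorem~\ref{hyperp} then writes $\sigma^*(C)$ as a sum of $\sigma^*(C_u\cap D)$ over the edges $u$ of $C$ not lying in $\beta_i$; these summands are exactly the vertex cones corresponding to the two possibilities ``$i\notin\omega'$ via $\Delta_i$ only'' versus ``$i\in\omega'$ via both $\Gamma_i$ and $\Delta_i$'', matching the case split in condition (4). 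Each of these is again a cone of a polytope with one fewer constrained index, so induction on $|A_\lambda\setminus A|$ finishes the combinatorial reduction; passing to the $\tau$'s (via the stabilisation argument already used to define $\tau_u$ and the $\xi$-equivariance used in the proof of Theorem~\ref{nonsimp}) then gives the claimed additive identity. The translation-invariance of vertex cones attached to a fixed $\omega$, noted in the paragraph defining $\Omega_A$, is what lets one pass between polytopes for different weights without bookkeeping of $\exp^*u$, which is why the statement is phrased for $\tau_\omega=\exp^*(-u)\tau_u$ rather than $\tau_u$.

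I would organise the writeup as: (i) record that for $\omega\in\Omega_A$ the cone $C_\omega$ (the common vertex cone of all $\Pi$'s realising $\omega$) is well-defined and $\sigma^*$ applied to it gives $\tau_\omega$ in $R$, invoking Propositions~\ref{vert}, \ref{edge}, \ref{pospow} and Theorem~\ref{infbrion}; (ii) prove the one-step identity $\tau_{\omega}=\sum_{\omega'\in\varphi_{A,B}^{-1}(\omega)}\tau_{\omega'}$ when $B=A\cup\{i\}$, by the Theorem~\ref{hyperp} argument above with $\beta_i=\Gamma_i$; (iii) compose these steps over a chain $\varnothing\subset\{i_1\}\subset\{i_1,i_2\}\subset\cdots\subset A_\lambda$, checking that the fibres multiply correctly, i.e. that $\varphi_{\varnothing,A_\lambda}=\varphi_{B,A_\lambda}\circ\varphi_{\varnothing,B}$ and hence $\varphi_{\varnothing,A_\lambda}^{-1}(\omega)$ is the disjoint union over $\omega''\in\varphi_{B,A_\lambda}^{-1}(\omega)$ of $\varphi_{\varnothing,B}^{-1}(\omega'')$.

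The main obstacle I expect is step (ii): verifying precisely that when the equality $b_i=b_{i-1}$ is released at a single index $i$, the edges of the resulting cone $C$ that are \emph{not} in $\beta_i=\Gamma_i$ are in natural bijection with exactly the two augmentations of $\omega$ permitted by condition (4), and that the cones $C_u\cap D$ produced by Theorem~\ref{hyperp} really are the vertex cones $C_{\omega'}$ (not merely cones with the same $\sigma^*$). This is a careful local analysis at the vertex using the edge description from Proposition~\ref{edge}(a),(b),(c): one must check that releasing the constraint at $i$ does not create or destroy edges at indices $j\ne i$, which relies on the ``good sequence'' propagation $y_{i+n-1}=0$ whenever $y_i=0$ and on Proposition~\ref{vert}(a). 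A secondary subtlety is that some of the intermediate cones $C$ are degenerate (they contain an affine line coming from the un-imposed equality), so one has to be sure Theorem~\ref{hyperp} is being applied to a genuinely pointed cone at each stage — which it is, because one releases the equalities one at a time and at each stage the polytope for the partially-generic weight still has $u$ as an honest vertex; I would spell this ordering out explicitly. Everything else — the $\xi$-equivariance, the $q$-adic stabilisation turning the finite-dimensional $\sigma^*$ identities into identities of the infinite $\tau$'s — is routine given Theorems~\ref{infbrion} and~\ref{nonsimp} and the techniques in their proofs.
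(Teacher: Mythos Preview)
Your overall plan coincides with the paper's proof: reduce to a one-step identity
\[
\tau_{\omega_B}=\sum_{\omega_A\in\varphi_{A,B}^{-1}(\omega_B)}\tau_{\omega_A},\qquad B=A\cup\{i\},
\]
prove that via Theorem~\ref{hyperp} applied to a finite truncation $C^m$, and then compose along a chain $\varnothing\subset\cdots\subset A_\lambda$. The fibre-multiplicativity in your step (iii) and the passage from $\sigma^*$ to $\tau$ are exactly as in the paper.

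Two details in your step (ii) are off and would cause trouble if carried through as written. First, the cone $C$ to which Theorem~\ref{hyperp} is applied must be the \emph{degenerate} vertex cone $C_{\omega_B}$, not the ``partially generic'' one: the theorem expresses $\sigma(C)$ as a sum over its edges, and you want the summands (the $C_e\cap D$) to be the generic cones $C_{\omega_A}$, so $C$ itself has to sit on the $B$-side. Your later sentence about ``$C_u\cap D$ really are the vertex cones $C_{\omega'}$'' already requires this, so the earlier description is inconsistent. Second, the supporting hyperplane is $\Gamma_{i-1}$, not $\Gamma_i$: condition~(4) says that the presence of both $\Gamma_i,\Delta_i$ in $\omega_B$ is governed by whether $\Gamma_{i-1}\in\omega_B$, and it is the edges of $C_{\omega_B}$ \emph{not} lying in $\Gamma_{i-1}$ that biject with $\varphi_{A,B}^{-1}(\omega_B)$ (each such edge is forced into exactly one of $\Gamma_i,\Delta_i$, and adjoining $\Gamma_{i-1}$ to its facet set yields an $\omega_A$). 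With $\beta=\Gamma_i$ this bijection does not line up with condition~(3). Also note that the fibre need not have size two; ``two possibilities'' is only the simplest case.

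Finally, the paper singles out the boundary case $s=0$, $i_1=1$: there is no hyperplane $\Gamma_0$, so the argument above does not apply directly, and one instead reduces $\Omega_{\{1\}}$ to $\Omega_{\{n\}}$ by the shift $\omega\mapsto\{\Gamma_1,\Delta_1\}\cup\{\Gamma_{j+1}:\Gamma_j\in\omega\}\cup\{\Delta_{j+1}:\Delta_j\in\omega\}$. You should add this to your writeup.
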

\begin{proof}
Consider the sets $A=\{i_1<\ldots<i_s\}$ and $B=A\cup\{i_{s+1}\}$ with $i_{s+1}>i_s$. We will prove the following formula by induction on $s$:
\begin{equation}\label{glue}
\tau_{\omega_B}=\sum\limits_{\omega_A\in\varphi_{A,B}^{-1}(\omega_B)}\tau_{\omega_A}.
\end{equation}
The stament of the theorem clearly follows.

Let $\lambda_B$ be a weight with set of zero coordinates $B$ and $\Pi_B$ be the corresponding polytope. Furthermore, let $\omega_B\in\Pi_B$ and $u_B$ be the corresponding vertex of $\Pi_B$. If $u_B\not\in\Gamma_{i_{s+1}-1}$ (as facet of $\Pi_B$), then $\Gamma_{i_{s+1}-1}\not\in\omega_B$ and $\omega_B\in\Omega_A$. Hence $\varphi_{A,B}^{-1}(\omega_B)=\{\omega_B\}$ and the formula is trivial.

If, however, $u_B\in\Gamma_{i_{s+1}-1}$, then note that $\{\Gamma_i,\Delta_i\}\subset\omega_B$ and apply theorem~\ref{hyperp} to the cone $C_{u_B}^m$ and hyperplane $\Gamma_{i_{s+1}-1}$. More accurately, since the theorem considers only the finite dimensional case, we apply it to the cone $C_{u_B}^m$ with $m>l_{u_B}$ and the corresponding $m$-dimensional section of  $\Gamma_{i_{s+1}-1}$. It is easily seen that the argument below shows that the obtained equality is actually~(\ref{glue}) multiplied by $$\prod\limits_{i>m}(1-\exp\gamma_i).$$

Clearly, if an edge of $C_{u_B}^m$ is not contained in $\Gamma_{i_{s+1}-1}$, then it  is contained in one of $\Gamma_i,\Delta_i$, hence the set of facets containing it supplemented by $\{\Gamma_{i_{s+1}-1}\}$ is of the form $\omega_A\in\varphi_{A,B}^{-1}(\omega_B)$ and the corresponding summand is equal to $\exp^*(u_B)\tau_{\omega_A}$. Conversely, any $\omega_A\in\varphi_{A,B}^{-1}(\omega_B)$ contains $\Gamma_{i_{s+1}-1}$ and corresponds to some edge of our cone.

It should be noted that this argument does not cover the case of $s=0$ and $i_1=1$. However, this case is easily reduced to the case $s=0$ and $i_1=n$. Namely, $\Omega_{\{1\}}$ is obtained from $\Omega_{\{n\}}$ by substituting each $\omega$ for $$\{\Gamma_1,\Delta_1\}\cup\{\Gamma_{i+1}|\Gamma_i\in\omega\}\cup\{\Delta_{i+1}|\Delta_i\in\omega\}.$$ 
\end{proof}


\begin{thebibliography}{}
\bibitem{fjlmm} B. Feigin, M. Jimbo, S. Loktev, T. Miwa, E. Mukhin, Addendum to ‘Bosonic Formulas for (k, l)-Admissible Partitions’, The Ramanujan Journal, December 2003, Volume 7, Issue 4, 519-530.
\bibitem{bri} M. Brion, Points entiers dans les polyèdres convexes, Ann. Sci. École Norm. Sup. 21 (1988), no. 4, 653-663.
\bibitem{brigr} C. J. Brianchon, Théorème nouveau sur les polyèdres. J. École (Royale) Polytechnique, 15 (1837), 317-319.
\bibitem{carter} R. Carter, Lie Algebras of Finite and Affine Type, Cambridge University Press, 2005.
\bibitem{beckrob} M. Beck, S. Robins, Computing the Continuous Discretely, Springer, 2009.
\end{thebibliography}
\end{document}